 \theoremstyle{plain}
\newtheorem{theorem}{Theorem}
\newtheorem{corollary}{Corollary}
\newtheorem{lemma}{Lemma}
\newtheorem{proposition}{Proposition}
\newtheorem{example}{Example}
\theoremstyle{definition}
\newtheorem{definition}{Definition}
\theoremstyle{remark}
\numberwithin{equation}{section}
\newcommand{\bT}{\begin{theorem}}
\newcommand{\eT}{\end{theorem}}
\newcommand{\bProp}{\begin{proposition}}
\newcommand{\eProp}{\end{proposition}}
\newcommand{\bE}{\begin{example}}
\newcommand{\eE}{\end{example}}
\newcommand{\bL}{\begin{lemma}}
\newcommand{\eL}{\end{lemma}}
\newcommand{\bP}{\begin{proof}}
\newcommand{\eP}{\end{proof}}
\newcommand{\bC}{\begin{corollary}}
\newcommand{\eC}{\end{corollary}}
\newcommand{\bD}{\begin{definition}}
\newcommand{\eD}{\end{definition}}
\newcommand{\be}{\begin{enumerate}}
\newcommand{\ee}{\end{enumerate}}
\newcommand{\beqa}{\begin{eqnarray*}}
\newcommand{\eeqa}{\end{eqnarray*}}
\newcommand{\beqaa}{\begin{eqnarray}}
\newcommand{\eeqaa}{\end{eqnarray}}
\newcommand{\ba}{\begin{array}}
\newcommand{\ea}{\end{array}}
\newdimen\plusheight
\def\+{\;\lower\plusheight\hbox{$+$}\;}
\newdimen\minusheight
\def\-{\;\lower\minusheight\hbox{$-$}\;}
\newdimen\cdotsheight
\def\cds{\lower\cdotsheight\hbox{$\cdots$}}
\begin{document}

\title[General WP-Bailey Chains ]
       {General WP-Bailey Chains}

\author{James Mc Laughlin}
\address{Mathematics Department\\
 Anderson Hall\\
West Chester University, West Chester, PA 19383}
\email{jmclaughl@wcupa.edu}

\author{Peter Zimmer}
\address{Mathematics Department\\
 Anderson Hall\\
West Chester University, West Chester, PA 19383}
\email{pzimmer@wcupa.edu}

 \keywords{
 Bailey chains, WP-Bailey Chains, WP-Bailey pairs}
 \subjclass[2000]{Primary: 33D15. Secondary:11B65, 05A19.}

\date{\today}

\begin{abstract}
Motivated by a recent paper of Liu and Ma, we describe a number of
general WP-Bailey chains. We show that many of the existing
WP-Bailey chains (or branches of the WP-Bailey tree), including
chains found by Andrews, Warnaar and Liu and Ma, arise as special
cases of these general WP-Bailey chains.

We exhibit three new branches of the WP-Bailey tree, branches which
also follow as special cases of these general WP-Bailey chains.

Finally, we describe a number of new transformation formulae for
basic hypergeometric series which arise as consequences of these new
WP-Bailey chains.
\end{abstract}

\maketitle

\section{Introduction}

Andrews \cite{A01}, following on from  prior work of Bressoud
\cite{B81a}
 and Singh \cite{S94}, defined a \emph{WP-Bailey
pair} to be a pair of sequences $(\alpha_{n}(a,k,q)$,
$\beta_{n}(a,k,q))$ (if the context is clear, we occasionally
suppress the dependence on $q$ and write $(\alpha_{n}(a,k),
\beta_{n}(a,k))$) satisfying {\allowdisplaybreaks
\begin{align}\label{WPpair}
\beta_{n}(a,k,q) &= \sum_{j=0}^{n}
\frac{(k/a;q)_{n-j}(k;q)_{n+j}}{(q;q)_{n-j}(aq;q)_{n+j}}\alpha_{j}(a,k,q)\\
&= \frac{(k/a,k;q)_n}{(aq,q;q)_n}\sum_{j=0}^{n}
\frac{(q^{-n};q)_{j}(kq^n;q)_{j}}{(aq^{1-n}/k;q)_{j}(aq^{n+1};q)_{j}}
\left(\frac{qa}{k}\right)^j\alpha_{j}(a,k,q). \notag
\end{align}
}

Andrews also showed in \cite{A01} that there were two distinct ways
to construct new WP-Bailey pairs from a given pair (see \eqref{wpn1}
and \eqref{wpn2} below). These two constructions allowed a ``tree"
of WP-Bailey pairs to be generated from a single WP-Bailey pair.
Andrews and Berkovich \cite{AB02} further investigated these two
branches of the WP-Bailey tree, in the process deriving many new
transformations for basic hypergeometric series. Spiridonov
\cite{S02} derived an elliptic generalization of Andrews first
WP-Bailey chain, and Warnaar \cite{W03}  added four new branches to
the WP-Bailey tree (see \eqref{wpn3}, \eqref{wpn6}, \eqref{wpn6a}
and \eqref{wpn7} below), two of which had generalizations to the
elliptic level. More recently, and motivated in part by the papers
above, Liu and Ma \cite{LM08} introduced the idea of a general
WP-Bailey chain (as a solution to a system of linear equations), and
added one new branch to the WP-Bailey tree (see \eqref{wpn4} below).

Motivated by Liu and Ma's concept of a general WP-Bailey pair, in
the present paper we reformulate their construction in terms of the
more standard $(a,k)$ notation and also derive three other general
WP-Bailey chains. We show that all the WP-Bailey chains referred to
above arise as special cases of these general chains.

We further derive three new WP-Bailey chains from these general
chains. Lastly, we find many new transformations of basic
hypergeometric series, by inserting known WP-Bailey pairs into these
three new WP-Bailey chains.

\section{The General WP-Bailey Chain of Liu and Ma}

Liu and Ma \cite{LM08} defined a WP-Bailey chain as follows:
\begin{definition}
\emph{ Any pair of sequences $\{\alpha_{n}(t,b)\}$ and
$\{\beta_n(t,b)\}$ satisfying
\begin{equation}\label{wpceq}
\beta_n(t,b)=\sum_{i=0}^n
\frac{(bt;q)_{n+i}(b;q)_{n-i}}{(q;q)_{n-i}(tq;q)_{n+i}}\alpha_i(t,b)
\end{equation}
is called a well-poised (in short, WP) Bailey pair w.r.t. the
parameters $t$ and $b$, denoted by $(\alpha_{n}(t,b),
\beta_n(t,b))$. Further, if for $n \geq 0$, the sequences
$\{\alpha_{n}'(t,c)\}$ and $\{\beta_n'(t,c)\}$ defined by
\begin{align}\label{newabs}
\alpha_{n}'(t,c)&=\lambda_n\alpha_{n}(t,b),\\
\beta_n'(t,c) &=\sum_{i=0}^n d_{n,i}\beta_{i}(t,b) \notag
\end{align}
is still a WP-Bailey pair w.r.t. the parameters $t$ and $c$, then
the iterative process from $(\alpha_{n}(t,b), \beta_n(t,b))$ to
$(\alpha_{n}'(t,c), \beta_n'(t,c))$ is said to be a WP-Bailey
chain.}
\end{definition}

The authors point out in \cite{LM08} that the $\lambda_n$ and the
$d_{n,i}$ always satisfy
\begin{equation}\label{dinlneq}
\sum_{i=k}^n
\frac{(bt;q)_{i+k}(b;q)_{i-k}}{(q;q)_{i-k}(tq;q)_{i+k}}d_{n,i} =
\frac{(ct;q)_{n+k}(c;q)_{n-k}}{(q;q)_{n-k}(tq;q)_{n+k}}\lambda_k,
\end{equation}
and then proceed, using infinite matrices, to invert \eqref{dinlneq}
to find expressions for the $d_{n,k}$ as sums over the $\lambda_i$.

For our purposes, we reformulate their result using the more usual
notation in which $(\alpha_{n}(a,k)$, $\beta_{n}(a,k))$ denotes a
WP-Bailey pair, using a single sequence $g_n$ (replacing their
$\lambda_n$), and giving a more direct proof which avoids the use of
infinite matrices. Before proceeding to this, we first recall the
following necessary result of Warnaar.
\begin{lemma}[Warnaar, \cite{W03}]\label{Wlem}
For $a$ and $k$ indeterminates the following equations are
equivalent:
\begin{align}\label{WPequiv}
\beta_{n}(a,k) &= \sum_{j=0}^{n}
\frac{(k/a)_{n-j}(k)_{n+j}}{(q)_{n-j}(aq)_{n+j}}\alpha_{j}(a,k),\\
\alpha_{n}(a,k) &= \frac{1-a q^{2n}}{{1-a}}\sum_{j=0}^{n}\frac{1-k
q^{2j}}{{1-k}}
\frac{(a/k)_{n-j}(a)_{n+j}}{(q)_{n-j}(kq)_{n+j}}\left(\frac{k}{a}
\right)^{n-j}\beta_{j}(a,k). \notag
\end{align}
\end{lemma}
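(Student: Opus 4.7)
The plan is to prove the two equations equivalent by establishing matrix-inverse orthogonality for the two triangular kernels they define. Setting
\[
N_{n,j} = \frac{(k/a;q)_{n-j}(k;q)_{n+j}}{(q;q)_{n-j}(aq;q)_{n+j}}, \qquad
M_{n,j} = \frac{1-aq^{2n}}{1-a}\cdot\frac{1-kq^{2j}}{1-k}\cdot\frac{(a/k;q)_{n-j}(a;q)_{n+j}}{(q;q)_{n-j}(kq;q)_{n+j}}\Bigl(\frac{k}{a}\Bigr)^{n-j},
\]
the stated formulas read $\beta_n=\sum_{j\le n}N_{n,j}\alpha_j$ and $\alpha_n=\sum_{j\le n}M_{n,j}\beta_j$, so it suffices to prove
\[
\sum_{j=m}^{n} N_{n,j}\,M_{j,m} = \delta_{n,m}.
\]
From this, either direction of substitution and a swap of summations yields the other relation (and for lower-triangular matrices the left and right inverses coincide).

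To verify the orthogonality I would shift the summation index by $i=j-m$ so the sum runs from $i=0$ to $n-m$, and use the standard decompositions $(x;q)_{p+i}=(x;q)_p(xq^p;q)_i$ and $(x;q)_{p-i}=(x;q)_p(-x)^{-i}q^{\binom{i}{2}-pi}/(q^{1-p}/x;q)_i$ to split every $q$-shifted factorial appearing in $N_{n,m+i}\,M_{m+i,m}$ into a factor independent of $i$ and a factor of the form $(\cdot\,;q)_i$. After collecting the $i$-independent prefactor, which one checks is $1$ when $n=m$, and tracking all powers of $q$, the weight $(k/a)^i$, and the signs, the remaining sum should take the canonical shape of a terminating, very-well-poised ${}_6\phi_5$ with top parameter $A=aq^{2m}$, termination factor $q^{-(n-m)}$, matching lower parameter $aq^{n+m+1}$, and two further very-well-poised pairs built from $a$ and $k$.

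I would then invoke the classical ${}_6\phi_5$ summation (Gasper--Rahman II.21),
\[
\sum_{i=0}^{N}\frac{1-Aq^{2i}}{1-A}\,\frac{(A,B,C,q^{-N};q)_i}{(q,Aq/B,Aq/C,Aq^{N+1};q)_i}\Bigl(\frac{Aq^{N+1}}{BC}\Bigr)^{\!i}=\frac{(Aq;q)_N\,(Aq/(BC);q)_N}{(Aq/B;q)_N\,(Aq/C;q)_N}.
\]
For $n=m$ the sum has the single term $i=0$, which is easily seen to equal $1$ after the prefactor simplifies via $(1-Aq^{2n})(A;q)_{2n}/[(1-A)(Aq;q)_{2n}]=1$. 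For $n>m$ the bookkeeping will force $BC=Aq$ exactly, so that the numerator factor $(Aq/(BC);q)_{n-m}=(1;q)_{n-m}$ vanishes and the whole sum is $0$. This yields $\sum_{j}N_{n,j}M_{j,m}=\delta_{n,m}$ and hence the equivalence.

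The main obstacle is the bookkeeping step in the middle: correctly matching the five very-well-poised pairs, the argument $Aq^{N+1}/(BC)$, the residual $i$-independent prefactor, and the sign and power of $q$ arising from the $(k/a)^{j-m}$ weight. Once the summand is correctly identified as a ${}_6\phi_5$ of the stated shape, both the evaluation and the collapse to $\delta_{n,m}$ are immediate from the classical summation.
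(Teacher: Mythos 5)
The paper does not actually prove this lemma: it is recalled verbatim from Warnaar \cite{W03} as a known inversion, so there is no in-paper argument to compare against. Your proposal is a correct, self-contained proof, and it is the standard one (it is in essence Bressoud's matrix inverse): the two triangular kernels are inverse to each other, and the orthogonality relation reduces, after the shift $j=m+i$ and the usual rewriting of $(x;q)_{N-i}$, to a terminating very-well-poised ${}_6\phi_5$ with argument $q$. I checked the bookkeeping: with $A=aq^{2m}$, $B=a/k$, $C=kq^{n+m}$ and $N=n-m$, the weights $(k/a)^{i}(qa/k)^{i}$ collapse to $q^{i}$, the argument is indeed $Aq^{N+1}/(BC)=q$, the $i$-independent prefactor times the single $i=0$ term equals $1$ when $n=m$, and for $N\ge 1$ the closed form carries the factor $(Aq/BC;q)_N=(q^{1-N};q)_N=0$. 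The one small slip is your assertion that the parameters force $BC=Aq$; in fact $BC=Aq^{N}$, so the vanishing Pochhammer is $(q^{1-N};q)_N$ rather than $(1;q)_{N}$ --- both vanish for $N\ge 1$ (each contains the factor $1-q^{0}$ or $1-1$), so the conclusion is unaffected. Your observation that left and right inverses of lower-triangular matrices coincide (the diagonal entries $(k;q)_{2n}/(aq;q)_{2n}$ are nonzero for indeterminate $a,k$) correctly disposes of the need to check both compositions, so either direction of the stated equivalence follows from the other.
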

If $a$ and $k$ are interchanged in the second equation, we also note
for later that Lemma \ref{Wlem} implies the following.
\begin{corollary}\label{warcor}
If $(\alpha_{n}(a,k), \beta_n(a,k))$ are a WP-Bailey pair, then so
are\\ $(\alpha_{n}'(a,k), \beta_n'(a,k))$, where
\begin{align*}
\alpha_{n}'(a,k)&=\frac{1-a q^{2n}}{1-a}\left (\frac{k}{a} \right)^n
\beta_{n}(k,a),\\
\beta_n'(a,k)&= \frac{1-k}{1-k q^{2n}}\left (\frac{k}{a} \right)^n
\alpha_{n}(k,a).
\end{align*}
\end{corollary}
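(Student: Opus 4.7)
The corollary is essentially a bookkeeping consequence of Lemma~\ref{Wlem}, obtained by swapping $a$ and $k$ in the inverse relation and then re-reading the resulting identity as a direct WP-Bailey relation in the variables $(a,k)$. So my plan is simply to unpack that swap carefully and check that the coefficients match the definitions of $\alpha_n'(a,k)$ and $\beta_n'(a,k)$ given in the statement.

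Concretely, I start from the second equation in Lemma~\ref{Wlem},
\[
\alpha_{n}(a,k) = \frac{1-a q^{2n}}{1-a}\sum_{j=0}^{n}\frac{1-k q^{2j}}{1-k}
\frac{(a/k)_{n-j}(a)_{n+j}}{(q)_{n-j}(kq)_{n+j}}\Bigl(\frac{k}{a}\Bigr)^{n-j}\beta_{j}(a,k),
\]
and interchange the roles of $a$ and $k$ (which is legitimate because Lemma~\ref{Wlem} is asserted for indeterminates $a$ and $k$). This gives an expression for $\alpha_n(k,a)$ as a sum over $\beta_j(k,a)$ with kernel involving $(k/a)_{n-j}$, $(k)_{n+j}$, $(aq)_{n+j}$ and the factor $(a/k)^{n-j}$.

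Next I multiply both sides by $\frac{1-k}{1-k q^{2n}}(k/a)^n$. The left-hand side becomes exactly $\beta_n'(a,k)$. On the right-hand side, the factor $(1-kq^{2n})/(1-k)$ cancels, and the combined power of $k/a$ simplifies via
\[
\Bigl(\frac{k}{a}\Bigr)^n\Bigl(\frac{a}{k}\Bigr)^{n-j} = \Bigl(\frac{k}{a}\Bigr)^{j},
\]
so I can absorb $\frac{1-aq^{2j}}{1-a}(k/a)^j\beta_j(k,a)$ into the summand, recognizing it as $\alpha_j'(a,k)$. The identity that emerges is precisely
\[
\beta_n'(a,k) = \sum_{j=0}^{n}
\frac{(k/a)_{n-j}(k)_{n+j}}{(q)_{n-j}(aq)_{n+j}}\alpha_{j}'(a,k),
\]
which is the defining relation~\eqref{WPpair} for the WP-Bailey pair $(\alpha_n'(a,k),\beta_n'(a,k))$.

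There is no real obstacle here; the only thing one must be careful about is matching the two factors of $k/a$ (the outer multiplier and the internal $(a/k)^{n-j}$ coming from Warnaar's lemma after interchange) so that the exponent $j$ on $(k/a)^j$ sits inside the sum next to $\beta_j(k,a)$, where it belongs in order to form $\alpha_j'(a,k)$. Once that exponent arithmetic is done, the rest is immediate identification of coefficients.
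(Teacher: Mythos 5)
Your proof is correct and follows exactly the route the paper indicates: interchange $a$ and $k$ in the inversion formula of Lemma~\ref{Wlem}, multiply through by $\frac{1-k}{1-kq^{2n}}(k/a)^n$, and read off the defining relation \eqref{WPpair} for the primed pair. The exponent bookkeeping $(k/a)^n(a/k)^{n-j}=(k/a)^j$ is handled correctly, so nothing is missing.
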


\begin{theorem}\label{gwct}
Suppose that $(\alpha_{n}(a,k),\,\beta_{n}(a,k))$ satisfy
\eqref{WPpair}, that
 $g_n$ is an arbitrary sequence of functions and $c$ and $e$ are
arbitrary constants. Then $(\alpha_{n}'(a,k)$, $\beta_{n}'(a,k))$
also satisfy \eqref{WPpair}, where
\begin{align}\label{gwp}
\alpha_n'&(a,k) = g_n\alpha_n(e,c),\\
\beta_{n}'&(a,k)= \sum_{j=0}^{n}\beta_{j}(e,c)\frac{(1-c
q^{2j})(k/a;q)_{n-j}(k;q)_{n+j}(eq;q)_{2j}}{(1-c
)(q;q)_{n-j}(aq;q)_{n+j}(cq;q)_{2j}} \notag\\
&\times  \sum_{r=0}^{n-j}
\frac{(1-eq^{2j+2r})(q^{-(n-j)},kq^{n+j},e/c,eq^{2j};q)_r}{(1-e
q^{2j})(a q^{n+j+1},a q^{1-(n-j)}/k,cq^{2j+1},q;q)_r}\left(\frac{q a
c}{k e} \right)^r g_{r+j}. \notag
\end{align}
\end{theorem}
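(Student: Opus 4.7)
The plan is to verify directly that the pair $(\alpha_{n}'(a,k),\beta_{n}'(a,k))$ defined in \eqref{gwp} satisfies \eqref{WPpair}, i.e.\ that
$$
\beta_{n}'(a,k)=\sum_{j=0}^{n}\frac{(k/a;q)_{n-j}(k;q)_{n+j}}{(q;q)_{n-j}(aq;q)_{n+j}}\,g_{j}\,\alpha_{j}(e,c).
$$
The natural strategy is to start from this target sum and to use Warnaar's lemma to eliminate $\alpha_{j}(e,c)$ in favour of $\beta_{i}(e,c)$. Concretely, I would apply the second equation of Lemma \ref{Wlem} with $(a,k)$ replaced by $(e,c)$ to write
$$
\alpha_{j}(e,c)=\frac{1-eq^{2j}}{1-e}\sum_{i=0}^{j}\frac{1-cq^{2i}}{1-c}\frac{(e/c;q)_{j-i}(e;q)_{j+i}}{(q;q)_{j-i}(cq;q)_{j+i}}\Big(\frac{c}{e}\Big)^{j-i}\beta_{i}(e,c),
$$
substitute into the target sum, exchange the order of summation so that $\beta_{i}(e,c)$ is the outer index, and set $r=j-i$. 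This produces a double sum of precisely the shape appearing in \eqref{gwp}.

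What is left is to show that the resulting coefficient of $\beta_{i}(e,c)g_{i+r}$ coincides with the one read off from \eqref{gwp}. The direct substitution yields the coefficient
$$
\frac{(k/a;q)_{n-i-r}(k;q)_{n+i+r}}{(q;q)_{n-i-r}(aq;q)_{n+i+r}}\cdot\frac{(1-cq^{2i})(1-eq^{2(i+r)})}{(1-c)(1-e)}\cdot\frac{(e/c;q)_{r}(e;q)_{r+2i}}{(q;q)_{r}(cq;q)_{r+2i}}\Big(\frac{c}{e}\Big)^{r},
$$
while the coefficient in \eqref{gwp} has the factors $(k/a;q)_{n-i}$, $(q;q)_{n-i}$, $(eq;q)_{2i}$, $(q^{-(n-i)};q)_{r}$, $(aq^{1-(n-i)}/k;q)_{r}$ and the weight $(qac/(ke))^{r}$. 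Matching the two is bookkeeping with standard $q$-Pochhammer identities. The elementary identity $(e;q)_{2i}/(1-e)=(eq;q)_{2i}/(1-eq^{2i})$ converts $(e;q)_{r+2i}$ and the factor $1/(1-e)$ into the displayed $(eq;q)_{2i}(eq^{2i};q)_{r}/(1-eq^{2i})$. The ratios $(k/a;q)_{n-i-r}/(k/a;q)_{n-i}$ and $(q;q)_{n-i}/(q;q)_{n-i-r}$ are rewritten via $(x;q)_{m-r}=(x;q)_{m}/(xq^{m-r};q)_{r}$ together with the reversal identity $(xq^{m-r};q)_{r}=(-xq^{m-r})^{r}q^{\binom{r}{2}}(q^{1-m}/x;q)_{r}$; this produces $(aq^{1-(n-i)}/k;q)_{r}$ and $(q^{-(n-i)};q)_{r}$ in the denominator and numerator respectively, along with a residual factor $(qa/k)^{r}$.

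Combining this $(qa/k)^{r}$ with the $(c/e)^{r}$ already present gives exactly $(qac/(ke))^{r}$, and every remaining $q$-Pochhammer symbol matches on inspection. The main obstacle is not conceptual but the careful tracking of signs and powers of $q$ in the reversal step: one needs the two reversals (applied to $(k/a;q)_{n-i-r}$ and to $(q;q)_{n-i-r}$) to leave behind a net factor of $q^{r}$, because that is precisely what upgrades the natural weight $(c/e)^{r}$ to the stated $(qac/(ke))^{r}$. Once this is checked, the identity of the two coefficients follows, and the theorem is proved.
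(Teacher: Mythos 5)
Your proposal is correct and is essentially identical to the paper's own proof: both start from the sum $\sum_{j}\frac{(k/a;q)_{n-j}(k;q)_{n+j}}{(q;q)_{n-j}(aq;q)_{n+j}}g_j\alpha_j(e,c)$, invoke the inversion in Lemma \ref{Wlem} (with $(a,k)$ replaced by $(e,c)$) to replace $\alpha_j(e,c)$ by a sum over $\beta_i(e,c)$, interchange the order of summation, and shift the inner index. The Pochhammer bookkeeping you outline, including the net residual factor $(qa/k)^r$ from the two reversals that upgrades $(c/e)^r$ to $(qac/(ke))^r$, checks out and matches the paper's computation.
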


\begin{proof}
If $\alpha_n'(a,k)$ is as given, then {\allowdisplaybreaks
\begin{align*}
\beta_{n}'(a,k)&=\sum_{r=0}^{n}
\frac{(k/a)_{n-r}(k)_{n+r}}{(q)_{n-r}(aq)_{n+r}}g_r\alpha_{r}(e,c)\\
&=\sum_{r=0}^{n}\frac{(k/a)_{n-r}(k)_{n+r}}{(q)_{n-r}(aq)_{n+r}}g_r\\
&\phantom{asdasdas}\times
\frac{1-eq^{2r}}{{1-e}}\sum_{j=0}^{r}\frac{1-c
q^{2j}}{{1-c}}\frac{(e/c)_{r-j}(e)_{r+j}}{(q)_{r-j}(cq)_{r+j}}\left(\frac{c}{e}
\right)^{r-j}\beta_{j}(e,c)\\
&=\sum_{j=0}^{n}\frac{1-c q^{2j}}{{1-c}}\beta_{j}(e,c)\\
&\phantom{asadasdadf}\times\sum_{r=j}^{n}\frac{(k/a)_{n-r}(k)_{n+r}}
{(q)_{n-r}(aq)_{n+r}}g_r\frac{1-eq^{2r}}{{1-e}}
\frac{(e/c)_{r-j}(e)_{r+j}}{(q)_{r-j}(cq)_{r+j}}\left(\frac{c}{e}
\right)^{r-j}\\
&=\sum_{j=0}^{n}\frac{1-c q^{2j}}{{1-c}}\beta_{j}(e,c)\\
&\phantom{asas}\times\sum_{r=0}^{n-j}\frac{(k/a)_{n-r-j}(k)_{n+r+j}}
{(q)_{n-r-j}(aq)_{n+r+j}} g_{r+j}\frac{1-eq^{2r+2j}}{{1-e}}
\frac{(e/c)_{r}(e)_{r+2j}}{(q)_{r}(cq)_{r+2j}}\left(\frac{c}{e}
\right)^{r}\\
&=\sum_{j=0}^{n}\frac{1-c q^{2j}}{{1-c}}\beta_{j}(e,c)
\frac{(k/a)_{n-j}(k)_{n+j}(eq)_{2j}}{(q)_{n-j}(aq)_{n+j}(cq)_{2j}}\\
&\phantom{}\times\sum_{r=0}^{n-j}\frac{1-eq^{2r+2j}}{{1-e q^{2j}}}
\frac{(q^{-(n-j)},k q^{n+j},e/c,e q^{2j};q)_r}{(a q^{1-(n-j)}/k,a
q^{n+j+1},c q^{2j+1},q;q)_r} \left(\frac{q a c}{k e}
\right)^{r}g_{r+j}.
\end{align*}
}
\end{proof}

Of course, from the perspective of adding branches to the WP-Bailey
tree, what is desirable is to choose $e$ and $c$ and the sequence
$g_n$ so as to make the inner sum over $r$ above have closed form.

It is shown in \cite{LM08} that Andrews' first WP-Bailey chain in
\cite{A01} follows as a special case of their theorem, and the
authors also find a new WP-Bailey chain as another special case. In
fact Andrews' second WP-Bailey chain in \cite{A01} and the first
WP-Bailey chain of Warnaar in \cite{W03} also follow as special
cases. Since our notation is somewhat different from that of Liu and
Ma \cite{LM08}, for completeness we show how all four WP Bailey
chains follow as special cases of Theorem \ref{gwct}.

 \begin{corollary} If $(\alpha_{n}(a,k),\,\beta_{n}(a,k))$ satisfy
\eqref{WPpair}, then so do $(\alpha_{n}'(a,k)$,\\
$\beta_{n}'(a,k))$
 and $(\tilde{\alpha}_{n}(a,k),\,\tilde{\beta}_{n}(a,k))$ \cite{A01},
 $(\alpha_{n}^{\dagger}(a,k),\,\beta_{n}^{\dagger}(a,k))$ \cite{W03} and
 $(\alpha_{n}^{*}(a,k)$, $\beta_{n}^{*}(a,k))$ \cite{LM08}, where
{\allowdisplaybreaks
\begin{align}\label{wpn1}
\alpha_{n}'(a,k)&=\frac{(\rho_1, \rho_2)_n}{(aq/\rho_1,
aq/\rho_2)_n}\left(\frac{aq}{\rho_1 \rho_2}\right)^n\alpha_{n}(a,c),
\hspace{25pt} \text{$($Andrews \cite{A01}$)$}\\
\beta_{n}'(a,k)&=\frac{(k\rho_1/a,k\rho_2/a)_n}{(aq/\rho_1,
aq/\rho_2)_n}\times  \notag\\
&\sum_{j=0}^{n} \frac{(1-c
q^{2j})(\rho_1,\rho_2)_j(k/c)_{n-j}(k)_{n+j}}{(1-c)(k\rho_1/a,k\rho_2/a)_n(q)_{n-j}(qc)_{n+j}}
\left(\frac{aq}{\rho_1 \rho_2}\right)^j\beta_{j}(a,c),\notag
\end{align}
} with $c=k\rho_1 \rho_2/aq$;
\begin{align}\label{wpn2}
\tilde{\alpha}_{n}(a,k)&= \frac{(qa^2/k)_{2n}}{(k)_{2n}}\left
(\frac{k^2}{q a^2} \right)^n\alpha_{n} \left(a, c
\right), \hspace{25pt} \text{$($Andrews \cite{A01}$)$}\\
\tilde{\beta}_{n}(a,k)&=\sum_{j=0}^{n}
\frac{(k^2/qa^2)_{n-j}}{(q)_{n-j}}\left (\frac{k^2}{q a^2}
\right)^j\beta_{j} \left(a, c \right), \notag
\end{align}
with  $c=q a^2/k$;
{\allowdisplaybreaks
\begin{align}\label{wpn3}
& \text{$($Warnaar \cite{W03}$)$}\\
\alpha_{n}^{\dagger}(a,k)&=\frac{1-k^{1/2}}{1-q^n
k^{1/2}}\frac{1+q^n a/k^{1/2}} {1+a/k^{1/2}}\frac{(a^2/k;q)_{2n}}
{(k;q)_{2n}}\left(\frac{k^2}{a^2}\right)^n\alpha_{n}(a,c),\notag\\
\beta_{n}^{\dagger}(a,k)&=\frac{1-k^{1/2}}{1-q^n k^{1/2}}
\sum_{j=0}^{n} \frac{1+q^j a/k^{1/2}}
{1+a/k^{1/2}}\frac{(k/c;q)_{n-j}}{(q;q)_{n-j}}\left(\frac{k^2}{a^2}\right)^j\beta_{j}(a,c),
\notag
\end{align}
} with  $c=a^2/k$ (we consider just the $\sigma =  1$ case of
Warnaar's theorem here);
 {\allowdisplaybreaks
\begin{align}\label{wpn4}
& \text{$($Liu and Ma \cite{LM08}$)$}\\
\alpha_{n}^{*}(a,k)&=\frac{(a^2/k;q^2)_{n}}
{(kq;q^2)_{n}}\left(\frac{-k}{a}\right)^n\alpha_{n}(a,c),\notag\\
\beta_{n}^{*}(a,k)&=\sum_{j=0}^{\lfloor n/2 \rfloor}
\frac{1-\frac{a^2q^{2n-4j}}{k}}
{1-\frac{a^2}{k}}\frac{(k;q^2)_{n-j}(k^2/a^2;q^2)_j}{(a^2
q^2/k;q^2)_{n-j}(q^2;q^2)j}\left(\frac{-k}{a}\right)^{n-2j}\beta_{n-2j}(a,c),
\notag
\end{align}
}with  $c=a^2/k$.
\end{corollary}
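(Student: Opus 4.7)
The plan is to show that each of the four chains \eqref{wpn1}--\eqref{wpn4} arises from Theorem \ref{gwct} by a suitable specialization of $e$, $c$, and $g_n$, and that for each such specialization the inner $r$-sum in the formula for $\beta_n'(a,k)$ collapses to closed form via a classical $q$-series summation identity. In every case the choice of $c$ as a function of $a$, $k$ (and any extra parameters) is precisely the balance condition that licenses the summation.

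For Andrews' first chain \eqref{wpn1}, take $e=a$, $c=k\rho_1\rho_2/(aq)$, and $g_n=((\rho_1,\rho_2;q)_n/(aq/\rho_1,aq/\rho_2;q)_n)(aq/(\rho_1\rho_2))^n$. After writing $g_{r+j}$ as $g_j$ times a length-$r$ Pochhammer factor, the inner sum becomes a terminating very-well-poised ${}_8\phi_7$ of base $aq^{2j}$ and argument $q$; the choice of $c$ is exactly the balance condition $(aq^{2j})^2\,q^{(n-j)+1}=(kq^{n+j})(a/c)(\rho_1 q^j)(\rho_2 q^j)$, so Jackson's ${}_8\phi_7$ summation evaluates it explicitly, and combining with the outer prefactor gives \eqref{wpn1}. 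For chains \eqref{wpn2} and \eqref{wpn3}, take $e=a$ together with $c=qa^2/k$ and $c=a^2/k$ respectively, with $g_n$ as dictated by the corresponding $\alpha$; the length-$2r$ Pochhammer factor in $g_{r+j}/g_j$, combined with the terms $(eq^{2j};q)_r$, $(cq;q)_{2j}$ and the very-well-poised prefactor $(1-eq^{2j+2r})/(1-eq^{2j})$ in the inner sum, produces the cancellation needed to reduce the $r$-sum to a terminating ${}_6\phi_5$-type closed form (with the half-integer powers $k^{1/2}$ of \eqref{wpn3} entering through the square-root factorization of the resulting product), and this yields the one-index sums of \eqref{wpn2} and \eqref{wpn3}.

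For Liu and Ma's chain \eqref{wpn4}, take $e=a$, $c=a^2/k$, and $g_n=((a^2/k;q^2)_n/(kq;q^2)_n)(-k/a)^n$. The base-$q^2$ Pochhammers in $g_n$ are rewritten via the standard splittings $(x;q)_{2r}=(x;q^2)_r(xq;q^2)_r$ and $(x;q^2)_{r+j}/(x;q^2)_j=(xq^{2j};q^2)_r$ to recast the inner sum in base $q^2$; a parity split on $r$ (one parity killed by a compensating numerator factor) followed by the reindexing $r=2s$ then produces the $\lfloor n/2\rfloor$-sum in $\beta_{n-2s}$ of \eqref{wpn4}.

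The main obstacle is the precise identification of the inner $r$-sum in each case as an instance of a classical summation: verifying the balance and terminating conditions that license Jackson's $_8\phi_7$ summation for \eqref{wpn1}, tracking the cancellations that reduce \eqref{wpn2} and \eqref{wpn3} to shorter closed forms, and carefully handling the mixed base-$q$/base-$q^2$ structure together with the parity split for \eqref{wpn4}. The Liu--Ma case is the most delicate, because the reindexing which ultimately produces $\beta_{n-2s}$ (rather than $\beta_s$) requires a nontrivial rewriting of the outer prefactor using base-$q^2$ Pochhammer identities.
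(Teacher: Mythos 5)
Your treatment of \eqref{wpn1} matches the paper's: with $e=a$, $c=k\rho_1\rho_2/aq$ and the stated $g_n$, the inner $r$-sum in \eqref{gwp} is a terminating very-well-poised $_8\phi_7$ whose balance condition is exactly the choice of $c$, and Jackson's sum \eqref{jackson} evaluates it. The other three cases, however, contain genuine gaps. For \eqref{wpn2} and \eqref{wpn3} you claim the inner sum collapses, after cancellation, to ``a terminating $_6\phi_5$-type closed form.'' It does not. With $e=a$ and $g_n$ containing the ratio $(c;q)_{2n}/(k;q)_{2n}$, the factor $g_{r+j}/g_j$ contributes four numerator and four denominator base-$q$ Pochhammers of length $r$ (from splitting each $(xq^{2j};q)_{2r}$ into its four square-root pieces), and combined with the parameters already present in \eqref{gwp} the inner sum is a genuine terminating $_{10}W_9$ in which no numerator parameter cancels a denominator parameter. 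Its evaluation requires the nonstandard summations \eqref{GR} (for \eqref{wpn2}) and \eqref{AB} (for \eqref{wpn3}); neither is a consequence of the $_6\phi_5$ sum, and without identifying them your argument has no actual evaluation step for these two chains.

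For \eqref{wpn4} the mechanism you describe is misdirected. The $\lfloor n/2\rfloor$-sum over $\beta_{n-2j}$ does not arise from a parity split and reindexing of the \emph{inner} variable $r$; there is no term-by-term ``compensating numerator factor'' that kills one parity of $r$. What actually happens is that the inner $r$-sum (with $e=a$, $c=a^2/k$, and total argument $-qa/k$ after absorbing the $(-k/a)^r$ coming from $g_{r+j}$) is an instance of the $q$-analogue of Watson's $_3F_2$ sum: an $_8\phi_7$ that equals $0$ when its upper index $n-j$ is odd and a closed product when $n-j$ is even. It is the \emph{outer} $j$-sum whose surviving terms $j=n-2i$ are then reindexed to produce the $\beta_{n-2i}(a,c)$ appearing in \eqref{wpn4}. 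As written, your proof neither evaluates the inner sum in closed form nor explains why only outer indices of one parity survive, so the stated single-sum expression for $\beta_n^{*}(a,k)$ is not reached.
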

\begin{proof}
In each case we apply Theorem \ref{gwct}, letting $e=a$ and $g_n$ be
the factor multiplying $\alpha_{n}(a,c)$ in each of the WP-Bailey
chains listed above. That the inner sum over $r$ in \eqref{gwp} has
a closed form and that the stated expressions for the various
$\beta_n^{\text{new}}(a,k)$ above now hold, follow as a consequence
of, respectively, Jackson's sum of a terminating $_8 \phi_7$,
\begin{multline}\label{jackson}
_{8} \phi _{7} \left [
\begin{matrix}
a, \,q a^{1/2},\, -q a^{1/2}, \,b,\, c,\,d,\,e, \, q^{-n}\\
a^{1/2},\,-a^{1/2},\, aq/b,\,aq/c,\,aq/d,\,aq/e,\, a q^{n+1}
\end{matrix}
; q,q \right ]\\ = \frac{(a
q,aq/bc,aq/bd,aq/cd;q)_n}{(aq/b,aq/c,aq/d,aq/bcd;q)_n},
\end{multline}
where $a^2q=bcdeq^{-n}$,  the following formulae (see \cite[problem
2.12]{GR04} and \cite[Equation (3.2)]{AB02}, respectively):
\begin{multline}\label{GR}
_{10} \phi _{9} \left [
\begin{matrix}
a, \,q \sqrt{a},\, -q \sqrt{a}, \,a \sqrt{\frac{q}{k}},\, -a
\sqrt{\frac{q}{k}},
\,\frac{a q}{ \sqrt{k}},-\frac{a q}{ \sqrt{k}},\,\frac{k}{a q},\,k q^n, \, q^{-n}\\
\sqrt{a},\,-\sqrt{a},\, \sqrt{k q},\,-\sqrt{k q},
\,\sqrt{k},\,-\sqrt{k},\,\frac{a^2 q^2}{k}, \frac{a q^{1-n}}{k},\, a
q^{n+1}
\end{matrix}
; q,q \right ]\\
= \frac{\left(a q,\frac{k^2}{q
a^2};q\right)_n}{\left(k,\frac{k}{a};q\right)_n},
\end{multline}
{\allowdisplaybreaks
\begin{multline}\label{AB}
_{10} \phi _{9} \left [
\begin{matrix}
a, \,q \sqrt{a},\, -q \sqrt{a}, \,a \sqrt{\frac{q}{k}},\, -a
\sqrt{\frac{q}{k}},
\,\frac{a}{ \sqrt{k}},-\frac{a q}{ \sqrt{k}},\,\frac{k}{a},\,k q^n, \, q^{-n}\\
\sqrt{a},\,-\sqrt{a},\, \sqrt{k q},\,-\sqrt{k q}, \,q
\sqrt{k},\,-\sqrt{k},\,\frac{a^2 q}{k}, \frac{a q^{1-n}}{k},\, a
q^{n+1}
\end{matrix}
; q,q \right ]\\
= \frac{\left(a
q,\sqrt{k},\frac{k^2}{a^2};q\right)_n}{\left(k,\frac{k}{a},q
\sqrt{k};q\right)_n},
\end{multline}
} and a $q$-analogue of Watson's $_3 F_2$ sum (see
\cite[II.16]{GR04}),
\begin{multline*}
_{8} \phi _{7} \left [
\begin{matrix}
\lambda, \,q \sqrt{\lambda},\, -q \sqrt{\lambda}, \,q^{-n},\,
bq^{n},\, \lambda \sqrt{q/b},\,-\lambda \sqrt{q/b}, \,
b/\lambda\\
\sqrt{\lambda},\,-\sqrt{\lambda},\, \lambda q^{n+1},\,\lambda
q^{1-n}/b,\,\lambda^2 q/b,\,\sqrt{q b},\, -\sqrt{q b}
\end{matrix}
; q,-\frac{q \lambda}{b}  \right ]\\ = \begin{cases} 0,& n \text{
odd} \\
\displaystyle{\frac{(\lambda
q;q)_{n}}{(b/\lambda;q)_{n}}\frac{(q,b^2/ \lambda^2;q^2)_{n/2}}{( b
q, q^2\lambda^2/b;q^2)_{n/2}}},& n \text{ even}.
\end{cases}
\end{multline*}
\end{proof}

Before describing and proving a new WP-Bailey chain which follows
from Theorem \ref{gwct}, we need a preliminary result.
\begin{lemma}
\begin{multline}\label{M1}
_{10} \phi _{9} \left [
\begin{matrix}
a, \,q \sqrt{a},\, -q \sqrt{a}, \,a \sqrt{\frac{q}{k}},\, -a
\sqrt{\frac{q}{k}},
\,\frac{a}{ \sqrt{k}},-\frac{a}{ \sqrt{k}},\,\frac{k q}{a},\,
k q^n, \, q^{-n}\\
\sqrt{a},\,-\sqrt{a},\, \sqrt{k q},\,-\sqrt{k q},
\,q\sqrt{k},\,-q\sqrt{k},\,\frac{a^2}{k}, \frac{a q^{1-n}}{k},\, a
q^{n+1}
\end{matrix}
; q,q \right ]\\
=\frac{1-k}{1-k q^{2n}} \frac{\left(a q,\frac{k^2q}{
a^2};q\right)_n}{\left(k,\frac{k}{a};q\right)_n}.
\end{multline}
\end{lemma}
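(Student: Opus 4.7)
The plan is to prove \eqref{M1} by exhibiting a WP-Bailey pair whose defining relation \eqref{WPpair} reduces, after simplification of Pochhammer products, to the stated $_{10}\phi_9$ summation. The proof should proceed along the same lines as the derivation of \eqref{GR} and \eqref{AB}, both of which arise as consistency conditions for specific WP-Bailey pairs constructed via Warnaar's chain \eqref{wpn3}. A strong structural hint about the correct construction is that the characteristic factor $(1-k)/(1-kq^{2n})$ on the right-hand side of \eqref{M1} is precisely the factor appearing in the $\beta$-component of the dualization provided by Corollary \ref{warcor}.

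Concretely, I would first apply Corollary \ref{warcor} to the unit WP-Bailey pair at parameters $(a,c)$ with $c=a^2/k$, producing a dualized pair $(\widetilde{\alpha}_n(a,c),\widetilde{\beta}_n(a,c))$ in which $\widetilde{\beta}_n(a,c)=\delta_{n,0}$ and $\widetilde{\alpha}_n(a,c)$ is an explicit product of $q$-Pochhammer symbols with the dualization prefactor $(1-aq^{2n})/(1-a)$. I would then apply Warnaar's chain \eqref{wpn3} to this dualized pair. Because $\widetilde{\beta}_j(a,c)=\delta_{j,0}$, the summation defining $\beta^{\dagger}$ collapses to its $j=0$ term, yielding an explicit closed form; meanwhile $\alpha^{\dagger}$ becomes an explicit product combining the Warnaar and dualization factors. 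Imposing the defining relation \eqref{WPpair} on the resulting composed pair then yields a $_{10}\phi_9$ summation identity, which after routine manipulation of Pochhammer symbols---using $(x;q)_r(-x;q)_r=(x^2;q^2)_r$ and $(x;q)_{2r}=(x;q^2)_r(xq;q^2)_r$ to recover the paired parameters $\pm a/\sqrt{k},\pm a\sqrt{q/k},\pm q\sqrt{k},\pm\sqrt{kq}$ on the left of \eqref{M1}---should coincide with the asserted identity.

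The main obstacle is the intricate bookkeeping of $q$-shifted factorials: verifying that the composition of Corollary \ref{warcor} with Warnaar's chain produces exactly the right-hand side $\frac{1-k}{1-kq^{2n}}\frac{(aq,k^2q/a^2;q)_n}{(k,k/a;q)_n}$ of \eqref{M1}, rather than some neighbouring variant, requires careful tracking of the signs, $q$-shifts and pairings introduced by the double-factorial identities above. Should this direct composition not deliver the statement verbatim, an alternative route is to apply Bailey's $_{10}\phi_9$-to-$_{10}\phi_9$ transformation \cite[III.28]{GR04} directly to \eqref{GR}, choosing the auxiliary triple $(b,c,d)$ among its seven numerator parameters so that the transformed series coincides with the $_{10}\phi_9$ on the left of \eqref{M1}; the Pochhammer prefactor produced by Bailey's transformation, combined with the closed form on the right of \eqref{GR}, is then expected to collapse to the required right-hand side of \eqref{M1}.
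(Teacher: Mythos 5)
Your primary route has a concrete error at its first step. Corollary \ref{warcor} sends $(\alpha_n,\beta_n)$ to a pair with $\alpha_n'(a,k)\propto\beta_n(k,a)$, so applied to the unit pair \eqref{up} it yields $\widetilde{\alpha}_n=\delta_{n,0}$ and $\widetilde{\beta}_n$ an explicit product (indeed $\widetilde{\beta}_n(a,c)=(c,c/a;q)_n/(q,aq;q)_n$) --- the opposite of what you assert; what you describe as the ``dualized pair'' is just the unit pair itself. Either way the route fails to reach \eqref{M1}: feeding the unit pair into Warnaar's chain \eqref{wpn3} makes the consistency condition \eqref{WPpair} collapse to precisely the Andrews--Berkovich summation \eqref{AB} (the identity that underlies \eqref{wpn3}), while feeding in the corrected dual gives $\alpha_n^{\dagger}=\delta_{n,0}$ and a single-sum identity of much lower weight than a $_{10}\phi_9$. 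The chain whose application to the unit pair regenerates \eqref{M1} is the new chain \eqref{wpn11} itself, but \eqref{wpn11} is established \emph{using} \eqref{M1}, so that direction is circular. The one non-circular ``WP-pair'' proof available in this framework is to apply Corollary \ref{warcor} to the independently established pair \eqref{mz01} from \cite{MZ07b}: the resulting pair \eqref{l1pr2} has $\beta_n=\frac{1-k}{1-kq^{2n}}\frac{(qk^2/a^2;q)_n}{(q;q)_n}$, and writing out \eqref{WPpair} for it is exactly \eqref{M1}. Your construction does not arrive at that pair.

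Your fallback route, by contrast, is precisely the paper's proof: in Bailey's transformation \eqref{B1} one sets $b=a\sqrt{q/k}$, $c=-a\sqrt{q/k}$, $d=k/aq$, $e=aq/\sqrt{k}$, $f=-aq/\sqrt{k}$, so that $\lambda=qa^2/bcd=-aq$ and the left-hand side becomes the left-hand side of \eqref{GR}; equating the right-hand side of the transformation with the closed form in \eqref{GR} evaluates the second $_{10}\phi_9$, and the substitution $a\mapsto-a/q$ turns that evaluation into \eqref{M1}. So the proposal succeeds only through its escape hatch; as written, the designated main argument would not deliver the statement.
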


\begin{proof}
In Bailey's transformation (see \cite{GR04}, III.28),
\begin{multline}\label{B1}
_{10} \phi _{9} \left [
\begin{matrix}
a, \,q \sqrt{a},\, -q \sqrt{a}, \,b,\, c, \,d,\,e,\,f,\,
\lambda a q^{n+1}/ef, \, q^{-n}\\
\sqrt{a},\,-\sqrt{a},\, aq/b,\,aq/c, \,aq/d,\,aq/e,\,aq/f, e f
q^{-n}/\lambda,\, a q^{n+1}
\end{matrix}
; q,q \right ]\\
=\frac{\left(a q,aq/ef,\lambda q/e,\lambda
q/f;q\right)_n}{\left(aq/e,aq/f,\lambda q/ef,\lambda
q;q\right)_n}\\
\times \, _{10} \phi _{9} \left [
\begin{matrix}
\lambda, \,q \sqrt{\lambda},\, -q \sqrt{\lambda}, \lambda
b/a,\,\lambda c/a, \,\lambda d/a,\,e,\,f,\,
\lambda a q^{n+1}/ef, \, q^{-n}\\
\sqrt{\lambda},\,-\sqrt{\lambda},\,  aq/b,\,aq/c, \,aq/d,\,\lambda
q/e,\,\lambda q/f, e f q^{-n}/a,\, \lambda q^{n+1}
\end{matrix}
; q,q \right ],
\end{multline}
where $\lambda = q a^2/bcd$, set $b=a\sqrt{q/k}$, $c=-a\sqrt{q/k}$,
$d=k/aq$, $e=aq/\sqrt{k}$ and $f=-aq/\sqrt{k}$, so that $\lambda =
-aq$. The left side of \eqref{B1} becomes the left side of
\eqref{GR}, and so equals the right side of \eqref{GR}. The result
follows upon replacing $a$ with $-a/q$, after some simple
manipulations.
\end{proof}

The next WP-Bailey chain appears to be new.

\begin{corollary}\label{cn11} If $(\alpha_{n}(a,k,q),\,\beta_{n}(a,k,q))$ satisfy
\eqref{WPpair}, then so do $(\alpha_{n}'(a,k,q)$,\\
$\beta_{n}'(a,k,q))$, where {\allowdisplaybreaks
\begin{align}\label{wpn11}
\alpha_{n}'(a,k,q)&=\frac{\left(\frac{a^2}{k};q\right)_{2n}}{(kq;q)_{2n}}
\left(\frac{k^2q}{a^2} \right)^n
\alpha_{n}\left(a,\frac{a^2}{kq},q\right),\\
\beta_{n}'(a,k,q)&=\frac{1-k}{1-k q^{2n}}\sum_{j=0}^{n}
\frac{\left(1-\frac{a^2 q^{2j}}{kq}\right)}{\left(1-\frac{a^2}{kq}
\right)} \frac{\left( \frac{k^2 q}{a^2};q\right)_{n-j}}{(q;q)_{n-j}}
\left(\frac{k^2q}{a^2}
\right)^j\beta_{j}\left(a,\frac{a^2}{kq},q\right).\notag
\end{align}
}
\end{corollary}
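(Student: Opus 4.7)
The plan is to apply Theorem~\ref{gwct} in exactly the manner used for the chains in the preceding corollary, with the specialization $e=a$, $c=a^2/(kq)$, and
\[
g_n \;=\; \frac{(a^2/k;q)_{2n}}{(kq;q)_{2n}}\left(\frac{k^2 q}{a^2}\right)^n,
\]
which is precisely the factor multiplying $\alpha_n(a,a^2/(kq),q)$ in the asserted $\alpha_n'$. With these choices, the identity $\alpha_n'(a,k)=g_n\,\alpha_n(e,c)$ from \eqref{gwp} reproduces the stated formula for $\alpha_n'(a,k,q)$ immediately, and all the remaining content lies in evaluating the inner sum over $r$ in the expression for $\beta_n'(a,k)$.

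Two preliminary simplifications drive that evaluation. First, one computes $qac/(ke)=a^2/k^2$ and
\[
\frac{g_{r+j}}{g_j}\;=\;\frac{(a^2q^{2j}/k;q)_{2r}}{(kq^{2j+1};q)_{2r}}\left(\frac{k^2q}{a^2}\right)^r,
\]
so the $r$-dependent weight $(qac/(ke))^r g_{r+j}$ collapses (up to the $j$-dependent factor $g_j$, which one pulls outside the $r$-sum) to $q^r\,(a^2q^{2j}/k;q)_{2r}/(kq^{2j+1};q)_{2r}$. Second, splitting the doubled Pochhammers via $(x;q)_{2r}=(\sqrt{x},-\sqrt{x},\sqrt{qx},-\sqrt{qx};q)_r$ exhibits the inner sum as a very-well-poised ${}_{10}\phi_9$. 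The main obstacle is recognizing this ${}_{10}\phi_9$ as the left-hand side of \eqref{M1} after the substitution $a\mapsto aq^{2j}$, $k\mapsto kq^{2j}$, $n\mapsto n-j$. Verifying the match requires careful bookkeeping of all ten numerator and nine denominator parameters---in particular, one checks that the parameter $kq/a$ in \eqref{M1} is invariant under the substitution (since $\tilde k q/\tilde a=kq/a$), and that the four parameters $\pm aq^j/\sqrt{k}$, $\pm aq^j\sqrt{q/k}$ produced by splitting $(a^2q^{2j}/k;q)_{2r}$ correspond respectively to $\pm\tilde a/\sqrt{\tilde k}$, $\pm\tilde a\sqrt{q/\tilde k}$ for $\tilde a=aq^{2j}$, $\tilde k=kq^{2j}$.

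Once this identification is in hand, \eqref{M1} evaluates the inner sum in closed form as
\[
\frac{1-kq^{2j}}{1-kq^{2n}}\cdot\frac{(aq^{2j+1};q)_{n-j}\,(k^2q/a^2;q)_{n-j}}{(kq^{2j};q)_{n-j}\,(k/a;q)_{n-j}}.
\]
To finish, I would multiply this evaluation by $g_j$ and by the outer factor $\frac{(1-cq^{2j})(k/a;q)_{n-j}(k;q)_{n+j}(eq;q)_{2j}}{(1-c)(q;q)_{n-j}(aq;q)_{n+j}(cq;q)_{2j}}$ from \eqref{gwp}, and simplify using the elementary identities $(aq;q)_{2j}(aq^{2j+1};q)_{n-j}=(aq;q)_{n+j}$, $(k;q)_{n+j}=(k;q)_{2j}(kq^{2j};q)_{n-j}$, and $(k;q)_{2j}/(kq;q)_{2j}=(1-k)/(1-kq^{2j})$. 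Each of these causes a clean telescoping cancellation, and after noting that $1-a^2q^{2j-1}/k=1-a^2q^{2j}/(kq)$, the surviving expression is precisely the formula \eqref{wpn11} for $\beta_n'(a,k,q)$.
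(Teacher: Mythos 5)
Your proposal is correct and follows exactly the paper's own route: specializing Theorem \ref{gwct} with $e=a$, $c=a^2/(kq)$, $g_n=(a^2/k;q)_{2n}(k^2q/a^2)^n/(kq;q)_{2n}$, and evaluating the inner sum via \eqref{M1} with $a\mapsto aq^{2j}$, $k\mapsto kq^{2j}$, $n\mapsto n-j$. The paper states this in one sentence; your write-up simply supplies the parameter-matching and telescoping details, all of which check out.
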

\begin{proof}
Set $e=a$, $c=a^2/kq$, $g_n=(k^2q/a^2)^n(a^2/k;q)_{2n}/(kq;q)_{2n}$
in Theorem \ref{gwct}, and use \eqref{M1}, with $a$ replaced with $a
q^{2j}$ and $k$ with $k q^{2j}$, to put the inner sum over $r$ at
\eqref{gwp} in closed form, and simplify the resulting sum.
\end{proof}

Remarks: 1) We note the similarities between this WP-Bailey chain
and the second chain of Andrews at \eqref{wpn2}, including the fact
that a double application of this construction returns the original
WP-Bailey pair.

2) Upon inserting the unit WP-Bailey pair \eqref{up} in this chain,
we get the pair
\begin{align}\label{l1pr2}
\alpha_n(a,k)&=\frac{1-a q^{2n}}{1-a}\frac{(a,k q/a;q)_n}{(q,
a^2/k;q)_n}\frac{(a^2/k;q)_{2n}}{(k q;q)_{2n}}\left(\frac{k}{a}\right)^n,\\
\beta_n(a,q)&=\frac{1-k}{1-k q^{2n}}\frac{(q k^2/
a^2;q)_n}{(q;q)_n}, \notag
\end{align}
which can also be derived by applying Corollary \ref{warcor} to the
pair at \eqref{mz01}.

3) The small number of summation formulae for terminating basic
hypergeometric series would tend to suggest that there are not many
WP-Bailey chains which follow from Theorem \ref{gwct}. With this in
mind, we turn to  new general WP-Bailey chains.

\section{New General WP-Bailey Chains and their Consequences}
Motivated by Warnaar's other WP-Bailey chains in \cite{W03}, we now
consider some new general WP-Bailey chains.

\begin{theorem}\label{gwct2}
Suppose that $(\alpha_{n}(a,k,q),\,\beta_{n}(a,k,q))$ satisfy
\eqref{WPpair}, that
 $g_n$ is an arbitrary sequence of functions and that $e$ and $c$
 are
arbitrary constants. Then $(\alpha_{n}'(a,k,q)$,
$\beta_{n}'(a,k,q))$ also satisfy \eqref{WPpair}, where
\begin{align}\label{gwp2}
&\alpha_n'(a,k,q) = g_n\alpha_n(e,c,q^2),\\
&\beta_{n}'(a,k,q)= \sum_{j=0}^{n}\beta_{j}(e,c,q^2)\frac{(1-c
q^{4j})(k/a;q)_{n-j}(k;q)_{n+j}(eq^2;q^2)_{2j}}{(1-c
)(q;q)_{n-j}(aq;q)_{n+j}(cq^2;q^2)_{2j}}\notag\\
&\times \sum_{r=0}^{n-j}
\frac{(1-eq^{4j+4r})(q^{-(n-j)},kq^{n+j};q)_r(e/c,eq^{4j};q^2)_r
g_{r+j}}{(1-e q^{4j})(a q^{n+j+1},a
q^{1-(n-j)}/k;q)_r(cq^{4j+2},q^2;q^2)_r}\left(\frac{q c a}{k e}
\right)^r.\notag
\end{align}
\end{theorem}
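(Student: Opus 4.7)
The plan is to mimic closely the proof of Theorem \ref{gwct}. The single new ingredient needed is that Lemma \ref{Wlem} holds for an arbitrary base; in particular, applying it with $q$ replaced by $q^2$ and $(a,k)$ replaced by $(e,c)$, I can invert the pair $(\alpha_n(e,c,q^2),\beta_n(e,c,q^2))$ to obtain
\begin{equation*}
\alpha_r(e,c,q^2) = \frac{1-e q^{4r}}{1-e}\sum_{j=0}^{r} \frac{1-c q^{4j}}{1-c} \frac{(e/c;q^2)_{r-j}(e;q^2)_{r+j}}{(q^2;q^2)_{r-j}(c q^2;q^2)_{r+j}}\left(\frac{c}{e}\right)^{r-j}\beta_j(e,c,q^2).
\end{equation*}

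Starting from the base-$q$ WP-Bailey relation
\begin{equation*}
\beta_n'(a,k,q) = \sum_{r=0}^n \frac{(k/a;q)_{n-r}(k;q)_{n+r}}{(q;q)_{n-r}(aq;q)_{n+r}}\,g_r\,\alpha_r(e,c,q^2),
\end{equation*}
I would substitute the above inversion, interchange the two summations so that $\beta_j(e,c,q^2)$ is pulled to the front, and then shift the inner summation index $r \mapsto r+j$ so that it runs from $0$ to $n-j$. This is the identical choreography used in the proof of Theorem \ref{gwct}.

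The remainder is bookkeeping. The base-$q$ factors in the summand factorise exactly as in the proof of Theorem \ref{gwct}, yielding $(k/a;q)_{n-j}(k;q)_{n+j}/[(q;q)_{n-j}(aq;q)_{n+j}]$ outside the inner sum and $(q^{-(n-j)},kq^{n+j};q)_r/[(aq^{1-(n-j)}/k,aq^{n+j+1};q)_r]$ together with a factor $(aq/k)^r$ inside. For the base-$q^2$ pieces, the index shift produces $(e;q^2)_{r+2j}$ and $(cq^2;q^2)_{r+2j}$, which I would split as $(e;q^2)_{2j}(eq^{4j};q^2)_r$ and $(cq^2;q^2)_{2j}(cq^{4j+2};q^2)_r$ respectively; combined with the elementary identity $(e;q^2)_{2j}/(1-e) = (eq^2;q^2)_{2j}/(1-eq^{4j})$, this produces the $(eq^2;q^2)_{2j}/(cq^2;q^2)_{2j}$ factor appearing outside the inner sum in \eqref{gwp2} and the $(1-eq^{4r+4j})/(1-eq^{4j})$ factor appearing inside. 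Finally, the $(c/e)^r$ from the inversion combines with $(aq/k)^r$ to give $(qca/(ke))^r$. Since the $(\cdot\,;q)$ and $(\cdot\,;q^2)$ Pochhammer symbols do not interact, there is no substantive obstacle; the only bookkeeping point that differs from the proof of Theorem \ref{gwct} is that the shift $r \mapsto r+j$ produces indices $r+2j$ rather than $r+j$ in the $q^2$-Pochhammers, and the very-well-poised quotient at the top of the inner sum becomes $(1-eq^{4r+4j})/(1-eq^{4j})$ in place of $(1-eq^{2r+2j})/(1-eq^{2j})$.
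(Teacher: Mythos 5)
Your proposal is correct and follows essentially the same route as the paper: the paper's proof likewise applies Warnaar's inversion (Lemma \ref{Wlem}) in base $q^2$ with parameters $(e,c)$, substitutes into the defining relation for $\beta_n'(a,k,q)$, interchanges the sums, shifts $r\mapsto r+j$, and performs exactly the Pochhammer factorisations you describe (including the splitting $(e;q^2)_{r+2j}=(e;q^2)_{2j}(eq^{4j};q^2)_r$ and the identity $(e;q^2)_{2j}/(1-e)=(eq^2;q^2)_{2j}/(1-eq^{4j})$). All of your bookkeeping checks out against the displayed chain of equalities in the paper.
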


\begin{proof}
As previously, if $\alpha_n'(a,k,q)$ is as given, then
{\allowdisplaybreaks
\begin{align*}
\beta_{n}'(a,k,q)&=\sum_{r=0}^{n}
\frac{(k/a)_{n-r}(k)_{n+r}}{(q)_{n-r}(aq)_{n+r}}g_r\alpha_{r}(e,c,q^2)\\
&=\sum_{r=0}^{n}\frac{(k/a)_{n-r}(k)_{n+r}}{(q)_{n-r}(aq)_{n+r}}g_r\\
&\times \frac{1-eq^{4r}}{{1-e}}\sum_{j=0}^{r}\frac{1-c
q^{4j}}{{1-c}}\frac{(e/c;q^2)_{r-j}(e;q^2)_{r+j}}{(q^2;q^2)_{r-j}
(cq^2;q^2)_{r+j}}\left(\frac{c}{e}
\right)^{r-j}\beta_{j}(e,c,q^2)\\
&=\sum_{j=0}^{n}\frac{1-c q^{4j}}{{1-c}}\beta_{j}(e,c,q^2)\\
&\times\sum_{r=j}^{n}\frac{(k/a)_{n-r}(k)_{n+r}}
{(q)_{n-r}(aq)_{n+r}}g_r\frac{1-eq^{4r}}{{1-a}}
\frac{(e/c;q^2)_{r-j}(e;q^2)_{r+j}}{(q^2;q^2)_{r-j}(cq^2;q^2)_{r+j}}
\left(\frac{c}{e}
\right)^{r-j}\\
&=\sum_{j=0}^{n}\frac{1-c
q^{4j}}{{1-c}}\beta_{j}(e,c,q^2)\sum_{r=0}^{n-j}\frac{(k/a)_{n-r-j}(k)_{n+r+j}}
{(q)_{n-r-j}(aq)_{n+r+j}}\\
& \phantom{asdasdasda}\times g_{r+j}\frac{1-eq^{4r+4j}}{{1-e}}
\frac{(e/c;q^2)_{r}(e;q^2)_{r+2j}}{(q^2;q^2)_{r}(cq^2;q^2)_{r+2j}}
\left(\frac{c}{e}
\right)^{r}\\
=&\sum_{j=0}^{n}\frac{1-c q^{4j}}{{1-c}}\beta_{j}(e,c,q^2)
\frac{(k/a)_{n-j}(k)_{n+j}(eq^2;q^2)_{2j}}{(q)_{n-j}(aq)_{n+j}(cq^2;q^2)_{2j}}
\sum_{r=0}^{n-j}\frac{1-eq^{4r+4j}}{{1-e q^{4j}}}\\&
\phantom{asdasd
}\times \frac{(q^{-(n-j)},k q^{n+j};q)_r(e/c,e
q^{4j};q^2)_r}{(a q^{1-(n-j)}/k,a q^{n+j+1};q)_r(c
q^{4j+2},q^2;q^2)_r} \left(\frac{q a c}{k e} \right)^{r}g_{r+j}.
\end{align*}
}
\end{proof}

As with Theorem \ref{gwct}, the aim is to find choices for the
sequence $g_n$ and the parameters $e$ and $c$ which lead to the
inner sum over $r$ above having closed form. We believe the
following WP-Bailey chain to be new.

 \begin{corollary}\label{cn2} If $(\alpha_{n}(a,k,q),\,\beta_{n}(a,k,q))$ satisfy
\eqref{WPpair}, then so do $(\alpha_{n}'(a,k,q)$,\\
$\beta_{n}'(a,k,q))$, where {\allowdisplaybreaks
\begin{align}\label{wpn5}
\alpha_{n}'(a,k,q)&=\frac{1+a}{1+a q^{2n}}\,q^n\,\alpha_{n}\left(a^2,\frac{ak}{q},q^2\right),\\
\beta_{n}'(a,k,q)&=\sum_{j=0}^{n}  \frac{\left(1-\frac{ak}{q}
q^{4j}\right)} {\left(1-\frac{ak}{q}\right)}
\frac{(-aq;q)_{2j}}{(-k;q)_{2j}} \frac{\left(\frac{q
k}{a};q^2\right)_{n-j}}{(q^2;q^2)_{n-j}} \frac{(k^2;q^2)_{n+j}}{(a k
q;q^2)_{n+j}}\notag \\
&\phantom{assddadsASDAsdasdaS} \times  \frac{1+a}{1+a
q^{2j}}\,q^j\beta_{j}\left(a^2,\frac{ak}{q},q^2\right).\notag
\end{align}
}
\end{corollary}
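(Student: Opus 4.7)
The plan is to apply Theorem \ref{gwct2} with the specialization $e=a^2$, $c=ak/q$, and the multiplier sequence $g_n=(1+a)q^n/(1+aq^{2n})$. This choice immediately makes $\alpha_n'(a,k,q)=g_n\,\alpha_n(a^2,ak/q,q^2)$ match the formula claimed in \eqref{wpn5}, and simultaneously reduces the argument $qca/(ke)$ of the inner sum in \eqref{gwp2} to unity, which is the correct normalization if that sum is to match a standard very-well-poised evaluation.

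With these substitutions the inner sum over $r$ becomes a terminating well-poised series that mixes base-$q$ factors inherited from the original pair, namely $(q^{-(n-j)},kq^{n+j};q)_r$ upstairs and $(aq^{n+j+1},aq^{1-(n-j)}/k;q)_r$ downstairs, with base-$q^2$ factors $(aq/k,a^2q^{4j};q^2)_r$ and $(akq^{4j+1},q^2;q^2)_r$, together with a half-shifted well-poised factor coming from $g_{r+j}$. The heart of the proof is to recognize this sum as a specialization of a known very-well-poised summation. Mimicking the strategy used to obtain \eqref{M1}, I would specialize Bailey's $_{10}\phi_9$ transformation \eqref{B1} (with $a$ replaced by $a^2q^{4j}$ and $k$ by $akq^{4j}$, or a close analogue), choosing the free parameters so that the right-hand side collapses to Jackson's sum \eqref{jackson} or to one of the mixed-base summations \eqref{GR}, \eqref{AB}, \eqref{M1}.

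Once the inner sum is in closed form, the remaining work is purely algebraic: multiply the closed form by the external prefactor
\[
\frac{(1-cq^{4j})(k/a;q)_{n-j}(k;q)_{n+j}(eq^2;q^2)_{2j}}{(1-c)(q;q)_{n-j}(aq;q)_{n+j}(cq^2;q^2)_{2j}}
\]
from \eqref{gwp2} and simplify using the standard base-doubling factorizations $(x^2;q^2)_n=(x;q)_n(-x;q)_n$ (applied with $x=aq$, $k$, and $q$) together with splittings such as $(y;q)_{n+j}=(y;q)_{2j}(yq^{2j};q)_{n-j}$. These reductions should convert the resulting expression into exactly the form stated in \eqref{wpn5}, which features $(-aq;q)_{2j}$, $(-k;q)_{2j}$, $(qk/a;q^2)_{n-j}$, $(k^2;q^2)_{n+j}$, and $(akq;q^2)_{n+j}$.

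The main obstacle, as in the proof of the first corollary and of Lemma \ref{Wlem}'s companion \eqref{M1}, is the middle step: pinning down precisely which specialization of \eqref{B1} evaluates the hybrid base-$q$ / base-$q^2$ inner sum. The half-shifted factor $q^{r+j}(1+a)/(1+aq^{2(r+j)})$ contributed by $g_{r+j}$ pushes the sum into a slightly non-standard shape that does not match the identities in \cite{GR04} verbatim, so some trial-and-error with the free parameters in Bailey's transformation will be needed before one side collapses to a tractable closed form; the subsequent Pochhammer bookkeeping, while tedious, is routine.
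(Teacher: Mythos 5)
Your specialization is exactly the one the paper uses ($e=a^2$, $c=ak/q$, $g_n=(1+a)q^n/(1+aq^{2n})$ in Theorem \ref{gwct2}), and your observation that the argument $qca/(ke)$ becomes $1$ is correct. But the proof hinges entirely on evaluating the inner sum in closed form, and there you leave a genuine gap: you propose to hunt for a specialization of Bailey's $_{10}\phi_9$ transformation \eqref{B1} by trial and error, asserting that the ``hybrid base-$q$/base-$q^2$'' sum does not match anything in \cite{GR04} verbatim. That is not the case, and no appeal to \eqref{B1} is needed. The point you miss is that the sum is \emph{not} genuinely mixed-base once you factor the base-$q^2$ Pochhammers into base-$q$ ones: $(a^2q^{4j};q^2)_r=(aq^{2j};q)_r(-aq^{2j};q)_r$, $(q^2;q^2)_r=(q;q)_r(-q;q)_r$, $(aq/k;q^2)_r=(\sqrt{aq/k};q)_r(-\sqrt{aq/k};q)_r$, and $(akq^{4j+1};q^2)_r$ splits likewise. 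Moreover the factor $(1+aq^{2j})/(1+aq^{2j+2r})$ contributed by $g_{r+j}$ combines with $(1-a^2q^{4j+4r})/(1-a^2q^{4j})$ to give precisely the very-well-poised factor $(1-aq^{2j}\cdot q^{2r})/(1-aq^{2j})$ in base $q$.

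After these factorizations the inner sum is a terminating very-well-poised $_8\phi_7$ in base $q$ with special parameter $aq^{2j}$, numerator parameters $-aq^{2j}$, $\pm\sqrt{aq/k}$, $kq^{n+j}$, $q^{-(n-j)}$, and argument $q$; one checks the Jackson balance condition $(aq^{2j})^2q=(-aq^{2j})(\sqrt{aq/k})(-\sqrt{aq/k})(kq^{n+j})q^{-(n-j)}$ directly. So the sum is evaluated by \eqref{jackson} itself, with $a\mapsto aq^{2j}$ and $n\mapsto n-j$ --- exactly as the paper does. Without pinning this down, your argument has no closed form to feed into the ``routine Pochhammer bookkeeping,'' so the proof is incomplete as written; with it, the rest of your outline (multiplying by the external prefactor and using $(x^2;q^2)_m=(x;q)_m(-x;q)_m$ to reach $(-aq;q)_{2j}$, $(-k;q)_{2j}$, $(qk/a;q^2)_{n-j}$, $(k^2;q^2)_{n+j}$, $(akq;q^2)_{n+j}$) goes through as you describe.
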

\begin{proof}
Set $e=a^2$, $c=ak/q$, $g_n=(1+a)q^n/(1+a q^{2n})$ in Theorem
\ref{gwct2}, and use \eqref{jackson} to put the inner sum over $r$
at \eqref{gwp2} in closed form, and simplify the resulting sum.
\end{proof}
Remark: Somewhat curiously, inserting the unit pair \eqref{up} in
this chain leads to a pair that is the special case $\rho_1=-\rho_2
= \sqrt{a q/k}$ of Singh's WP-Bailey pair at \eqref{singhpr}.

In Theorem \ref{gwct3} we showed how to derive  new general
WP-Bailey pairs $(\alpha_{n}'(a,k,q)$, $\beta_{n}'(a,k,q))$ in terms
of $(\alpha_{n}(e,c,q^2),\,\beta_{n}(e,c,q^2))$, where\\
$(\alpha_{n}(a,k,q),\,\beta_{n}(a,k,q))$ is an existing WP-Bailey
pair. It is also possible to derive new general WP-Bailey pairs in
terms of $(\alpha_{n}(e,c,q^{1/2}),\,\beta_{n}(e,c,q^{1/2}))$, but
for aesthetic reasons we replace $q^{1/2}$ by $q$ everywhere, to get
the following theorem.

\begin{theorem}\label{gwct3}
Suppose that $(\alpha_{n}(a,k,q),\,\beta_{n}(a,k,q))$ satisfy
\eqref{WPpair}, that
 $g_n$ is an arbitrary sequence of functions and that $e$ and $c$
 are
arbitrary constants. Then $(\alpha_{n}'(a,k,q)$,
$\beta_{n}'(a,k,q))$ also satisfy \eqref{WPpair}, where
\begin{align}\label{gwp3}
&\alpha_n'(a,k,q^2) = g_n\alpha_n(e,c,q),\\
&\beta_{n}'(a,k,q^2)= \sum_{j=0}^{n}\beta_{j}(e,c,q)\frac{(1-c
q^{2j})(k/a;q^2)_{n-j}(k;q^2)_{n+j}(eq;q)_{2j}}{(1-c
)(q^2;q^2)_{n-j}(aq^2;q^2)_{n+j}(cq;q)_{2j}}\notag\\
\times \sum_{r=0}^{n-j}&
\frac{(1-eq^{2j+2r})(q^{-2(n-j)},kq^{2n+2j};q^2)_r(e/c,eq^{2j};q)_r
g_{r+j}}{(1-e q^{2j})(a q^{2n+2j+2},a
q^{2-2(n-j)}/k;q^2)_r(cq^{2j+1},q;q)_r}\left(\frac{q^2 c a}{k e}
\right)^r.\notag
\end{align}
\end{theorem}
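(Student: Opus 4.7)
The proof should mirror almost verbatim the argument used for Theorems \ref{gwct} and \ref{gwct2}; only the roles of the bases $q$ and $q^2$ are swapped relative to Theorem \ref{gwct2}. The plan is as follows. Start with $\alpha_n'(a,k,q^2) = g_n\alpha_n(e,c,q)$ and substitute it into the defining WP-Bailey relation \eqref{WPpair} at base $q^2$, obtaining
\[
\beta_{n}'(a,k,q^2) = \sum_{r=0}^{n}\frac{(k/a;q^2)_{n-r}(k;q^2)_{n+r}}{(q^2;q^2)_{n-r}(aq^2;q^2)_{n+r}}\,g_r\,\alpha_{r}(e,c,q).
\]
Then invoke Warnaar's Lemma \ref{Wlem}, this time at base $q$, to rewrite $\alpha_{r}(e,c,q)$ as an inner sum of $\beta_{j}(e,c,q)$'s. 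This gives a double sum over $0\le j\le r\le n$.

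Next, interchange the order of summation to $0\le j\le n$, $j\le r\le n$, and shift $r\mapsto r+j$ in the inner sum so that $r$ runs from $0$ to $n-j$. The $j$-dependent prefactors that appear for each $\beta_j(e,c,q)$ are obtained by pulling out of the inner sum the terms independent of the new summation index: from the outer factor this produces $(k/a;q^2)_{n-j}(k;q^2)_{n+j}/[(q^2;q^2)_{n-j}(aq^2;q^2)_{n+j}]$, while from the inner Warnaar expansion one extracts $(1-cq^{2j})/(1-c)$ together with $(eq;q)_{2j}/(cq;q)_{2j}$, obtained by splitting $(e;q)_{r+2j} = (e;q)_{2j}(eq^{2j};q)_r$ and similarly for $(cq;q)_{r+2j}$. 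The analogous splittings $(k/a;q^2)_{n-r-j}$, $(k;q^2)_{n+r+j}$, $(q^2;q^2)_{n-r-j}$, $(aq^2;q^2)_{n+r+j}$ produce the base-$q^2$ Pochhammers $(q^{-2(n-j)},kq^{2n+2j};q^2)_r$ and $(aq^{2-2(n-j)}/k,aq^{2n+2j+2};q^2)_r$ in the residual inner sum, after using the standard identities $(x;q^2)_{n-r-j}/(x;q^2)_{n-j} = (-1)^r q^{-r(r-1)}(q^{-2(n-j)};q^2)_r/(\text{power of }x)^r$ and the reverse identity for $(x;q^2)_{n+r+j}/(x;q^2)_{n+j}$. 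Collecting the powers of $q$, $a$, $c$, $k$, $e$ produced by these reversals yields the stated multiplier $(q^2ca/(ke))^r$.

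The one minor subtlety, and the only step worth checking carefully, is the bookkeeping between the base-$q^2$ Pochhammers coming from the outer WP-Bailey relation and the base-$q$ Pochhammers coming from the inner Warnaar inversion. Concretely, one must verify that the coefficient $(c/e)^{r-j}$ appearing in Lemma \ref{Wlem} combines correctly, after the index shift, with the extracted negative powers of $q$ (arising from the index reversals in the $(\,\cdot\,;q^2)$-Pochhammers) to give exactly $(q^2ca/(ke))^r$. Apart from this, every step is mechanical and entirely parallel to the computation displayed in the proof of Theorem \ref{gwct2}; the final expression matches \eqref{gwp3} term by term.
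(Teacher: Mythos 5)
Your proposal is correct and is exactly the argument the paper intends: the authors omit the proof of Theorem \ref{gwct3}, stating only that it "follows very similar lines to the proofs of Theorems \ref{gwct} and \ref{gwct2}," and your substitution of $\alpha_n'(a,k,q^2)=g_n\alpha_n(e,c,q)$ into the base-$q^2$ relation, followed by Warnaar's inversion at base $q$, interchange of summation, and the index shift $r\mapsto r+j$, reproduces that computation with the roles of $q$ and $q^2$ swapped relative to Theorem \ref{gwct2}. The bookkeeping you flag (the reversal identities for the $(\,\cdot\,;q^2)$-Pochhammers combining with $(c/e)^r$ to give $(q^2ca/(ke))^r$) does check out and yields \eqref{gwp3} exactly.
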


We omit the proof as it follows very similar lines to the proofs of
Theorems \ref{gwct} and \ref{gwct2}. Warnaar's WP-Bailey chains in
Theorems 2.4 and 2.5 of \cite{W03} follows as a special case of the
theorem above (we use $\sqrt{a}$ where Warnaar uses $a$, so as to
keep the parameters in the new WP Bailey pairs as $a$ and $k$).

\begin{corollary}[Warnaar, \cite{W03}]\label{cn3}
If $(\alpha_{n}(a,k,q),\,\beta_{n}(a,k,q))$ satisfy \eqref{WPpair},
then so do $(\alpha_{n}'(a,k,q)$,$\beta_{n}'(a,k,q))$, where
{\allowdisplaybreaks
\begin{align}\label{wpn6}
\alpha_{n}'(a,k,q^2)&=
\alpha_{n}\left(\sqrt{a},\frac{k}{q\sqrt{a}},q\right),\\
\beta_{n}'(a,k,q^2)&=\frac{\left(\frac{-k}{\sqrt{a}};q\right)_{2n}}
{(-q\sqrt{a};q)_{2n}}\sum_{j=0}^{n} \frac{\left(1-\frac{kq^{2j}
}{q\sqrt{a}}\right)}{\left(1-\frac{k}{q\sqrt{a}}\right)}
\frac{\left(\frac{aq^2}{k};q^2\right)_{n-j}}{(q^2;q^2)_{n-j}}
\frac{(k;q^2)_{n+j}}{\left( \frac{k^2
}{a};q^2\right)_{n+j}}\notag \\
&\phantom{assddadsASDAsdasdaS} \times \left(
\frac{k}{aq}\right)^{n-j}
\beta_{j}\left(\sqrt{a},\frac{k}{q\sqrt{a}},q\right).\notag
\end{align}
}
\end{corollary}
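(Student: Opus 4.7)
My approach is to specialize Theorem \ref{gwct3} by setting $e=\sqrt{a}$, $c=k/(q\sqrt{a})$, and $g_n\equiv 1$. Under these choices the formula $\alpha_n'(a,k,q^2)=g_n\alpha_n(e,c,q)$ in \eqref{gwp3} immediately yields $\alpha_n'(a,k,q^2)=\alpha_n(\sqrt{a},k/(q\sqrt{a}),q)$, the first line of \eqref{wpn6}. So the only substantive work is in the $\beta$-part. A brief computation gives $(q^{2}ca)/(ke)=q$, so the inner $r$-sum in \eqref{gwp3} becomes a terminating very-well-poised bibasic series with argument $q^{r}$, carrying parameters $q^{-2(n-j)},kq^{2n+2j}$ in base $q^{2}$ together with $aq/k,\sqrt{a}\,q^{2j}$ in base $q$.

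The crux of the proof is to evaluate this bibasic sum in closed form. Because of the mixed bases, none of the summations \eqref{jackson}, \eqref{GR}, or \eqref{AB} used for the preceding corollary applies directly, and a quadratic-type summation is required. A natural route is to start from Bailey's $_{10}\phi_{9}$ transformation \eqref{B1}, choose the parameters so that the transformed side collapses via Jackson's $_{8}\phi_{7}$ sum \eqref{jackson}, and read off an identity of exactly the required bibasic form; alternatively, one may simply appeal to the quadratic summation Warnaar uses in \cite{W03} to derive this chain directly. Either way, the evaluation produces a quotient of $q$- and $q^{2}$-Pochhammer symbols.

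Substituting this closed form back into \eqref{gwp3} and multiplying by the prefactor $\frac{(1-cq^{2j})(k/a;q^2)_{n-j}(k;q^2)_{n+j}(eq;q)_{2j}}{(1-c)(q^2;q^2)_{n-j}(aq^2;q^2)_{n+j}(cq;q)_{2j}}$, I anticipate a long but essentially mechanical simplification. The base-$q$ factor $(\sqrt{a}q;q)_{2j}/(kq/\sqrt{a};q)_{2j}$ should combine with one piece of the closed form to release the $n$-dependent, $j$-independent prefactor $(-k/\sqrt{a};q)_{2n}/(-q\sqrt{a};q)_{2n}$; the $(k/a;q^{2})_{n-j}$ factor should flip via the usual Pochhammer reversal identity to produce $(aq^{2}/k;q^{2})_{n-j}$ together with the geometric tail $(k/(aq))^{n-j}$; and the remaining $q^{2}$-Pochhammers should regroup into $(k;q^{2})_{n+j}/(k^{2}/a;q^{2})_{n+j}$. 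The main obstacle is locating and correctly applying the bibasic summation; once it is in hand, the remaining manipulations are routine $q$-series bookkeeping of the same flavor as in the proofs of the earlier corollaries.
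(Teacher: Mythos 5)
Your choice of specialization ($e=\sqrt{a}$, $c=k/(q\sqrt{a})$, $g_n\equiv 1$) and your computation of the argument $q^2ca/(ke)=q$ match the paper exactly, and the $\alpha$-part is immediate as you say. However, the central step of your plan rests on a misdiagnosis: you assert that the inner $r$-sum is an irreducibly bibasic series so that Jackson's sum \eqref{jackson} ``does not apply directly'' and a quadratic summation must be imported. This is not the case. Every base-$q^2$ Pochhammer appearing in the inner sum of \eqref{gwp3} has a perfect-square parameter, so it splits into two base-$q$ Pochhammers via $(x^2;q^2)_r=(x;q)_r(-x;q)_r$: for instance $(q^{-2(n-j)};q^2)_r=(q^{-(n-j)};q)_r(-q^{-(n-j)};q)_r$ and $(kq^{2n+2j};q^2)_r=(\sqrt{k}q^{n+j};q)_r(-\sqrt{k}q^{n+j};q)_r$, and similarly for the two denominator factors. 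After this splitting the inner sum is a genuine single-base terminating very-well-poised $_8\phi_7$ with base parameter $A=\sqrt{a}\,q^{2j}$ and upper parameters $\pm q^{-(n-j)}$, $\pm\sqrt{k}\,q^{n+j}$, $qa/k$; one checks that $A^2q=aq^{4j+1}$ equals the product of these five parameters, so the Jackson balancing condition holds and \eqref{jackson} evaluates the sum directly. This is precisely what the paper does.

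The consequence is that your proof is incomplete at its crux: the closed-form evaluation is never actually produced. You offer two unexecuted alternatives (deriving a quadratic summation from Bailey's transformation \eqref{B1}, or citing an unspecified summation from Warnaar), neither of which is verified to yield an identity of the required shape, and the reason you give for needing them is wrong. Your outline of the subsequent bookkeeping (reversal of $(k/a;q^2)_{n-j}$, regrouping into the stated prefactors) is plausible, but without the correct evaluation of the inner sum the argument does not go through. The fix is simply to perform the Pochhammer splitting above and apply \eqref{jackson}, after which the ``simple manipulations'' you describe complete the proof.
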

\begin{proof}
Set $e=\sqrt{a}$, $c=k/q\sqrt{a}$, $g_n=1$, use \eqref{jackson} to
get the inner sum over $r$ in \eqref{gwp3} in closed form, and the
result follows after some simple manipulations.
\end{proof}

\begin{corollary}[Warnaar, \cite{W03}]\label{cn33}
If $(\alpha_{n}(a,k,q),\,\beta_{n}(a,k,q))$ satisfy \eqref{WPpair},
then so do $(\alpha_{n}'(a,k,q)$,$\beta_{n}'(a,k,q))$, where
{\allowdisplaybreaks
\begin{align}\label{wpn6a}
\alpha&_{n}'(a,k,q^2)=
q^{-n} \frac{1+\sqrt{a}q^{2n}}{1+\sqrt{a}}\alpha_{n}\left(\sqrt{a},\frac{k}{\sqrt{a}},q\right),\\
\phantom{asasa}\beta_{n}'&(a,k,q^2)=q^{-n}\frac{\left(\frac{-kq}{\sqrt{a}};q\right)_{2n}}
{(-\sqrt{a};q)_{2n}}\sum_{j=0}^{n} \frac{\left(1-\frac{kq^{2j}
}{\sqrt{a}}\right)}{\left(1-\frac{k}{\sqrt{a}}\right)}
\frac{\left(\frac{a}{k};q^2\right)_{n-j}}{(q^2;q^2)_{n-j}}
\frac{(k;q^2)_{n+j}}{\left( \frac{k^2q^2
}{a};q^2\right)_{n+j}}\notag \\
&\phantom{assddadsASDAsdasdaS} \times \left(
\frac{k}{a}\right)^{n-j}
\beta_{j}\left(\sqrt{a},\frac{k}{\sqrt{a}},q\right).\notag
\end{align}
}
\end{corollary}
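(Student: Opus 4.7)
\emph{Proof plan.} The strategy mirrors that of Corollary \ref{cn3}: we specialize Theorem \ref{gwct3} with a carefully chosen $g_n$, the role of which is to promote the base-$q$ very-well-poised factor present in \eqref{gwp3} into a base-$q^2$ one. Specifically, I set
\[
e=\sqrt{a},\qquad c=\frac{k}{\sqrt{a}},\qquad g_n=q^{-n}\,\frac{1+\sqrt{a}\,q^{2n}}{1+\sqrt{a}}
\]
in Theorem \ref{gwct3}. With this assignment the identity $\alpha_n'(a,k,q^2)=g_n\alpha_n(e,c,q)$ in \eqref{gwp3} is exactly the $\alpha_n'$ stated in \eqref{wpn6a}, so only the $\beta_n'$ formula requires work.

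For $\beta_n'$, substitute these values into \eqref{gwp3}. The $j$-constant prefactor sitting in front of $\beta_j(\sqrt{a},k/\sqrt{a},q)$ reads
\[
\frac{1-(k/\sqrt{a})q^{2j}}{1-k/\sqrt{a}}\cdot\frac{(k/a;q^2)_{n-j}(k;q^2)_{n+j}(\sqrt{a}q;q)_{2j}}{(q^2;q^2)_{n-j}(aq^2;q^2)_{n+j}(kq/\sqrt{a};q)_{2j}}.
\]
Since $g_{r+j}=g_j\cdot q^{-r}(1+\sqrt{a}q^{2j+2r})/(1+\sqrt{a}q^{2j})$, the $j$-dependent constant $g_j$ factors out of the inner sum, while the remaining $r$-dependent piece combines with the factor $(1-\sqrt{a}q^{2j+2r})/(1-\sqrt{a}q^{2j})$ in \eqref{gwp3} to give
\[
\frac{(1-\sqrt{a}q^{2j+2r})(1+\sqrt{a}q^{2j+2r})}{(1-\sqrt{a}q^{2j})(1+\sqrt{a}q^{2j})}=\frac{1-a\,q^{4j+4r}}{1-a\,q^{4j}},
\]
a base-$q^2$ very-well-poised factor in the leading parameter $a q^{4j}$; the $q^{-r}$ absorbs one power of $q$ from the argument $q^2 ca/(ke)=q^2$, leaving $q^r$ per term.

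The inner $r$-sum is now a terminating very-well-poised basic hypergeometric series whose closed-form evaluation is a direct application of Jackson's sum \eqref{jackson}: Jackson's four ``free'' parameters are matched against $q^{-2(n-j)}$ and $kq^{2n+2j}$ from the base-$q^2$ Pochhammers and $a/k$, $\sqrt{a}q^{2j}$ from the base-$q$ Pochhammers, and the balancing condition $a^2q=bcdeq^{-n}$ is verified by a direct check with these specializations. The main obstacle is the post-Jackson bookkeeping: one must combine the resulting four-fold Pochhammer product of length $n-j$ with the $j$-constant prefactor above and with $g_j$, and then repeatedly apply $(x;q)_{2m}(-x;q)_{2m}=(x^2;q^2)_{2m}$ together with the standard base-$q$ reflection $(x;q)_m=(-x)^m q^{\binom{m}{2}}(q^{1-m}/x;q)_m$ to convert the $\sqrt{a}$-laden base-$q$ Pochhammers into the pure base-$q^2$ symbols $(a/k;q^2)_{n-j}$ and $(k^2q^2/a;q^2)_{n+j}$, to pull out the global $q^{-n}(-kq/\sqrt{a};q)_{2n}/(-\sqrt{a};q)_{2n}$, and to accumulate the power $(k/a)^{n-j}$ from the combination of Jackson's argument and the reflection factors. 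This simplification is routine but laborious, and once completed it yields \eqref{wpn6a}.
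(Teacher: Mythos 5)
Your setup is the same as the paper's: specialize Theorem \ref{gwct3} with $e=\sqrt{a}$, $c=k/\sqrt{a}$ and $g_n=q^{-n}(1+\sqrt{a}q^{2n})/(1+\sqrt{a})$, and your computation of the combined very-well-poised factor
$\frac{(1-\sqrt{a}q^{2j+2r})(1+\sqrt{a}q^{2j+2r})}{(1-\sqrt{a}q^{2j})(1+\sqrt{a}q^{2j})}=\frac{1-aq^{4j+4r}}{1-aq^{4j}}$
is correct. But the proof breaks at the evaluation step: this inner sum is \emph{not} a direct application of Jackson's sum \eqref{jackson}. Jackson's formula is a single-base $_8\phi_7$ identity, whereas your $r$-sum carries Pochhammers in base $q^2$ (namely $(q^{-2(n-j)},kq^{2n+2j};q^2)_r$ and their denominator partners) alongside Pochhammers in base $q$ (namely $(a/k;q)_r$, $(\sqrt{a}q^{2j};q)_r$, $(kq^{2j+1}/\sqrt{a};q)_r$, $(q;q)_r$); one cannot ``match Jackson's four free parameters'' across two different bases. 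If you force everything into base $q$ by splitting each base-$q^2$ symbol into a $\pm$ pair, the factor $\frac{1-aq^{4j+4r}}{1-aq^{4j}}$ requires the additional parameter pair $\pm iq(aq^{4j})^{1/4}$, so the series is a genuinely quadratic $_{10}W_9$, not an $_8W_7$, and a generic $_{10}W_9$ has no Jackson-type evaluation. (Contrast this with Corollary \ref{cn3}, where $g_n\equiv 1$ leaves the ordinary factor $\frac{1-\sqrt{a}q^{2j+2r}}{1-\sqrt{a}q^{2j}}$ and the parameter count does come down to an $_8W_7$, which is why Jackson works there but not here.)

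The paper instead evaluates the inner sum using the Andrews--Berkovich quadratic summation \eqref{ABsum1},
$_{10}W_9(\sqrt{a};iqa^{1/4},-iqa^{1/4},\sqrt{k}q^n,-\sqrt{k}q^n,q^{-n},-q^{-n},a/k;q;q)$, applied with $a\mapsto aq^{4j}$, $k\mapsto kq^{4j}$, $n\mapsto n-j$; the right-hand side of \eqref{ABsum1} then supplies exactly the factors $(a/k;q^2)_{n-j}$, $(k^2q^2/a;q^2)_{n+j}$, $(-kq/\sqrt{a};q)_{2n}/(-\sqrt{a};q)_{2n}$ and $(k/a)^{n-j}$ appearing in \eqref{wpn6a}. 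So the missing ingredient is this specific $_{10}W_9$ evaluation; without replacing your appeal to \eqref{jackson} by \eqref{ABsum1} (or an equivalent quadratic summation), the proof does not go through. The remaining $q$-product bookkeeping you describe is then indeed routine.
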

\begin{proof}
Set $e=\sqrt{a}$, $c=k/\sqrt{a}$,
$g_n=q^{-n}(1+\sqrt{a}q^n)/(1+\sqrt{a})$. To get the inner sum over
$r$ in \eqref{gwp3} in closed form, we use the following result of
Andrews and Berkovich \cite{AB02} (we replace $\sqrt{q}$ with $q$):
\begin{multline}\label{ABsum1}
_{10}\,W_9(\sqrt{a}; iqa^{1/4},-i q a^{1/4}, \sqrt{k}q^n, -
\sqrt{k}q^n, q^{-n},-q^{-n},a/k;q;q)\\
=\frac{(aq^2,a/k;q^2)_n}{(k/a,k^2/a;q^2)_n}\frac{(-k
q/\sqrt{a};q)_{2n}}{(\sqrt{a};q)_{2n}} \left( \frac{k}{a
q}\right)^n,
\end{multline}
with $a$ replaced with $aq^{4j}$, $k$ with $k q^{4j}$ and $n$ with
$n-j$. The result follows once again, after some $q$-product
manipulations.
\end{proof}

The next result also appears to be new.

\begin{corollary}\label{cn333}
If $(\alpha_{n}(a,k,q),\,\beta_{n}(a,k,q))$ satisfy \eqref{WPpair},
then so do $(\alpha_{n}'(a,k,q)$, $\beta_{n}'(a,k,q))$, where
{\allowdisplaybreaks
\begin{align}\label{wpn6aa}
\alpha&_{n}'(a,k,q^2)=
\alpha_{n}\left(\sqrt{a},\frac{k}{\sqrt{a}},q\right),\\
\phantom{asasa}\beta_{n}'&(a,k,q^2)=\frac{\left(\frac{-k}{\sqrt{a}};q\right)_{2n}}
{(-q\sqrt{a};q)_{2n}}\sum_{j=0}^{n} \frac{\left(1-\frac{k^2q^{4j}
}{a}\right)}{\left(1-\frac{k^2}{a}\right)}
\frac{\left(\frac{a}{k};q^2\right)_{n-j}}{(q^2;q^2)_{n-j}}
\frac{(k;q^2)_{n+j}}{\left( \frac{k^2q^2
}{a};q^2\right)_{n+j}}\notag \\
&\phantom{assddadsASDAsdasdaS} \times \left( \frac{k
q}{a}\right)^{n-j}
\beta_{j}\left(\sqrt{a},\frac{k}{\sqrt{a}},q\right).\notag
\end{align}
}
\end{corollary}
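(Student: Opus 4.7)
The plan is to specialize Theorem \ref{gwct3} in direct analogy with the proof of Corollary \ref{cn33}, but with a different choice of the auxiliary sequence $g_n$. Specifically, I take $e=\sqrt{a}$, $c=k/\sqrt{a}$ and $g_n=1$, so that immediately $\alpha_n'(a,k,q^2) = \alpha_n(\sqrt{a},k/\sqrt{a},q)$, matching the stated $\alpha_n'$. Note that these are the same $e$ and $c$ as in Corollary \ref{cn33}; only the sequence $g_n$ differs.

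With these parameters, the argument appearing in the inner $r$-sum of \eqref{gwp3} simplifies to $q^2 ca/(ke)=q^2$. Pairing up the base-$q^2$ Pochhammer symbols as products of base-$q$ symbols via $(X;q^2)_r=(\sqrt{X};q)_r(-\sqrt{X};q)_r$, the inner sum becomes a terminating, very-well-poised $_{10}\phi_9$ in base $q$ of exactly the type handled by the Andrews--Berkovich style identities. It is structurally close to the $_{10}W_9$ in \eqref{ABsum1}, but with different ``twist'' parameters because $g_n=1$ rather than $q^{-n}(1+\sqrt{a}q^{2n})/(1+\sqrt{a})$ as in Corollary \ref{cn33}.

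The heart of the proof is thus to identify (or derive) the appropriate closed-form evaluation of this $_{10}\phi_9$, which I expect to be a companion to \eqref{ABsum1} obtainable from Bailey's transformation \eqref{B1} by a parameter substitution differing from the one behind \eqref{ABsum1} by a single internal $q$-shift, in exactly the spirit in which \eqref{GR} and \eqref{AB} differ from each other. Once that identity is in hand, I substitute $a\to aq^{4j}$, $k\to kq^{4j}$ and $n\to n-j$ to close the inner sum in \eqref{gwp3}, and then apply standard $q$-product manipulations to recombine the factors into the prefactor $(-k/\sqrt{a};q)_{2n}/(-q\sqrt{a};q)_{2n}$, the weight $(1-k^2q^{4j}/a)/(1-k^2/a)$ and the power $(kq/a)^{n-j}$ displayed in \eqref{wpn6aa}. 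The main obstacle is pinning down the correct companion summation and then carefully tracking the Pochhammer-symbol shifts to bring the output into the exact form stated; the remainder is routine.
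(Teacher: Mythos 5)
Your specialization of Theorem \ref{gwct3} --- $e=\sqrt{a}$, $c=k/\sqrt{a}$, $g_n=1$ --- and the overall architecture (close the inner $r$-sum of \eqref{gwp3} with a quadratic summation, substitute $a\to aq^{4j}$-type shifts, then simplify) coincide exactly with what the paper does. The gap is that the one non-routine ingredient, the closed-form evaluation of the inner sum, is never produced: you only state that you ``expect'' a companion to \eqref{ABsum1} to exist and to be extractable from Bailey's transformation \eqref{B1}. That evaluation \emph{is} the proof; without it the argument is a plan. Compounding this, your identification of the series is off. With $g_n=1$ the factor $(1-eq^{2j+2r})/(1-eq^{2j})$ supplies only the usual very-well-poised pair, and the remaining numerator parameters are $e/c=a/k$, $q^{-(n-j)}$, $-q^{-(n-j)}$, $\sqrt{k}\,q^{n+j}$, $-\sqrt{k}\,q^{n+j}$ --- five in all --- so the inner sum is a terminating $_8W_7$ with argument $q^2$, not a $_{10}\phi_9$. (The two extra parameters and the argument $q$ in Corollary \ref{cn33} come precisely from the nontrivial $g_n$ there; setting $g_n=1$ removes them.) Hunting for a ten-parameter companion to \eqref{ABsum1} would therefore send you after the wrong identity.

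The summation that actually closes the sum is Warnaar's quadratic $_8W_7$ evaluation, quoted in the paper as \eqref{Wsum1}:
\begin{equation*}
{}_{8}W_7\!\left(a;\, b,\, \frac{a q^n}{\sqrt{b}},\, -\frac{a q^n}{\sqrt{b}},\, q^{-n},\,-q^{-n};\,q,\,q^2\right)
=\frac{(-a/b;q)_{2n}}{(-a q;q)_{2n}}\,
\frac{(a^2q^2,\,b;q^2)_n}{(1/b,\,a^2q^2/b^2;q^2)_n}\left(\frac{q}{b}\right)^{n},
\end{equation*}
applied with $a\mapsto \sqrt{a}\,q^{2j}$, $b\mapsto a/k$ and $n\mapsto n-j$; one checks directly that $aq^n/\sqrt{b}\mapsto \sqrt{k}\,q^{n+j}$ and $aq/b\mapsto kq^{2j+1}/\sqrt{a}$, matching the inner sum of \eqref{gwp3} term by term. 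Once you either cite this result or genuinely derive it (it does follow from Bailey's transformation by specialization, but that derivation must be carried out, not presumed), the remaining $q$-product manipulations leading to the prefactor and the power $(kq/a)^{n-j}$ in \eqref{wpn6aa} are routine, as you say.
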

\begin{proof}
Set $e=\sqrt{a}$, $c=k/\sqrt{a}$, $g_n=1$. This time, to get the
inner sum over $r$ in \eqref{gwp3} in closed form, we use the
following result of Warnaar \cite{W03}:
\begin{multline}\label{Wsum1}
_{8}\,W_7(a; b, a q^n/\sqrt{b}, -
a q^n/\sqrt{b}, q^{-n},-q^{-n};q;q^2)\\
=\frac{(-a/b;q)_{2n}}{(-a q;q)_{2n}}
\frac{(a^2q^2,b;q^2)_n}{(1/b,a^2q^2/b^2;q^2)_n} \left(
\frac{q}{b}\right)^n,
\end{multline}
with $a$ replaced with $\sqrt{a}q^{2j}$, $b$ with $a/k$ and $n$ with
$n-j$. The result follows, as above, after some $q$-product
manipulations.
\end{proof}
Inserting the unit pair \eqref{up} in this chain gives the pair
\begin{align}\label{cn333pr}
\alpha_n(a,k,q)&=\frac{1-\sqrt{a}q^n}{1-\sqrt{a}}\frac{\left(\sqrt{a},\frac{a}{k};\sqrt{q}
\right)_n}{\left(\sqrt{q},k\sqrt{\frac{q}{a}};\sqrt{q}
\right)_n}\left( \frac{k}{a}\right)^n,
\\
\beta_n(a,k,q)&=\frac{\left(
-\frac{k}{\sqrt{a}};\sqrt{q}\right)_{2n}}{\left( -\sqrt{a
q};\sqrt{q}\right)_{2n}}\frac{\left(
\frac{a}{k},k;q\right)_{n}}{\left( \frac{k^2
q}{a},q;q\right)_{n}}\left( \frac{k \sqrt{q}}{a}\right)^n, \notag
\end{align}
which is somewhat reminiscent of Bressoud's pair at \eqref{Bpr2}.

Motivated by Warnaar's Theorem 2.6 in \cite{W03}, we also have the
following general WP-Bailey chain.

\begin{theorem}\label{gwct4}
Suppose that $(\alpha_{n}(a,k,q),\,\beta_{n}(a,k,q))$ satisfy
\eqref{WPpair}, that
 $g_n$ is an arbitrary sequence of functions and that $e$ and $c$
 are
arbitrary constants. Then $(\alpha_{n}'(a,k,q)$,
$\beta_{n}'(a,k,q))$ also satisfy \eqref{WPpair}, where
\begin{align}\label{gwp4}
\alpha&_{2n}'(a,k,q) = g_n\alpha_n(e,c,q^2), \hspace{25pt} \alpha_{2n+1}(a,k,q)=0,\\
\beta_{n}'&(a,k,q)= \sum_{j=0}^{n}\beta_{j}(e,c,q^2)\frac{(1-c
q^{4j})\left(\frac{k}{a};q\right)_{n-2j}(k;q)_{n+2j}(eq^2;q^2)_{2j}}{(1-c
)(q;q)_{n-2j}(aq;q)_{n+2j}(cq^2;q^2)_{2j}}\notag\\
\times \sum_{r=0}^{\lfloor n/2 \rfloor-j}&
\frac{(1-eq^{4j+4r})(q^{-(n-2j)},kq^{n+2j};q)_{2r}(e/c,eq^{4j};q^2)_r
g_{r+j}}{(1-e q^{4j})(a q^{n+2j+1},a
q^{1-(n-2j)}/k;q)_{2r}(cq^{4j+2},q^2;q^2)_r}\left(\frac{q^2 c
a^2}{k^2 e} \right)^r.\notag
\end{align}
\end{theorem}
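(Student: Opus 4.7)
The proof follows the same template established in the proofs of Theorems~\ref{gwct} and \ref{gwct2}. Substituting the new $\alpha'$ into the defining relation \eqref{WPpair} and using $\alpha'_{2r+1}(a,k,q)=0$, the summation index $i$ collapses onto even values, so writing $i=2r$ gives
\[
\beta_n'(a,k,q) \;=\; \sum_{r=0}^{\lfloor n/2\rfloor} \frac{(k/a;q)_{n-2r}(k;q)_{n+2r}}{(q;q)_{n-2r}(aq;q)_{n+2r}}\, g_r\, \alpha_r(e,c,q^2).
\]
I then invoke Warnaar's Lemma~\ref{Wlem} at base $q^2$ to expand each $\alpha_r(e,c,q^2)$ as a very-well-poised linear combination of the $\beta_j(e,c,q^2)$ for $0\le j\le r$, introducing the usual factor $(1-eq^{4r})(1-cq^{4j})(e/c;q^2)_{r-j}(e;q^2)_{r+j}/((1-e)(1-c)(q^2;q^2)_{r-j}(cq^2;q^2)_{r+j})\cdot (c/e)^{r-j}$.

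Next, I interchange the order of summation so that $j$ is the outer index with $r$ ranging from $j$ to $\lfloor n/2\rfloor$, and then shift $r\mapsto r+j$, turning the inner sum into one over $0\le r\le \lfloor n/2\rfloor-j$. What remains is $q$-Pochhammer bookkeeping: each of the shifted factors $(A;q)_{n-2j-2r}$, $(A;q)_{n+2j+2r}$ and $(A;q^2)_{r+2j}$ is split into a $j$-dependent prefactor times an $r$-dependent tail via the standard identities $(A;q)_{N-M} = (A;q)_N\cdot(-A)^{-M}q^{\binom{M+1}{2}-MN}/(q^{1-N}/A;q)_M$, $(A;q)_{N+M}=(A;q)_N(Aq^N;q)_M$, and $(A;q^2)_{r+2j}=(A;q^2)_{2j}(Aq^{4j};q^2)_r$. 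Collecting the $j$-dependent material produces exactly the stated outer prefactor $(1-cq^{4j})(k/a;q)_{n-2j}(k;q)_{n+2j}(eq^2;q^2)_{2j}/((1-c)(q;q)_{n-2j}(aq;q)_{n+2j}(cq^2;q^2)_{2j})$.

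The only genuine obstacle is keeping track of the doubled shifts. Because indices move in steps of $2r$ rather than $r$, every $r$-dependent base-$q$ Pochhammer symbol comes out with subscript $2r$; this is the source of $(q^{-(n-2j)},kq^{n+2j};q)_{2r}$ and $(aq^{n+2j+1},aq^{1-(n-2j)}/k;q)_{2r}$ in the inner sum, while the $q^2$-Pochhammers retain subscript $r$. Correspondingly, the factor $(qa/k)^r$ that arose in the analogous step of Theorem~\ref{gwct2} here appears as $(qa/k)^{2r}$; combined with the $(c/e)^r$ from Warnaar's lemma this yields the claimed geometric factor $(q^2ca^2/(k^2e))^r$. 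No new summation or transformation identity is needed, so I would expect to write the argument in the abbreviated style the authors adopt for Theorem~\ref{gwct3}, merely indicating the chain of substitutions, the interchange of summations, the shift $r\mapsto r+j$, and the product manipulations.
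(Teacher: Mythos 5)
Your proposal is correct and follows exactly the template the authors use for Theorems~\ref{gwct} and \ref{gwct2} (substitute $\alpha'$ into \eqref{WPpair}, collapse to even indices, invert via Lemma~\ref{Wlem} at base $q^2$, interchange and shift the summation, then split the Pochhammer symbols), which is precisely the argument the paper omits for Theorem~\ref{gwct4} with the remark that it is ``very similar to the proofs of the other theorems above.'' The bookkeeping you describe, including the $(qa/k)^{2r}$ arising from the doubled shift combining with $(c/e)^r$ to give $\left(q^2ca^2/(k^2e)\right)^r$, checks out against the stated formula.
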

Once again, we omit the proof, since it is very similar to the
proofs of the other theorems above.

Upon setting $g_n=1$, $e=a$ and $c=k^2/a$, and once again using
\eqref{jackson}, we obtain the WP-Bailey chain of Warnaar in Theorem
2.6 of \cite{W03}.
\begin{corollary}[Warnaar, \cite{W03}]\label{cn5}
If $(\alpha_{n}(a,k,q),\,\beta_{n}(a,k,q))$ satisfy \eqref{WPpair},
then so do $(\alpha_{n}'(a,k,q)$, $\beta_{n}'(a,k,q))$, where
{\allowdisplaybreaks
\begin{align}\label{wpn7}
\alpha_{2n}'(a,k,q)&=
\alpha_{n}\left(a,\frac{k^2}{a},q^2\right),\hspace{25pt} \alpha_{2n+1}'(a,k,q)=0\\
\beta_{n}'(a,k,q)&=\frac{\left(\frac{k^2 q}{a};q^2\right)_{n}}
{(aq;q^2)_{n}}\sum_{j=0}^{\lfloor n/2 \rfloor}
\frac{\left(1-\frac{k^2q^{4j}
}{a}\right)}{\left(1-\frac{k^2}{a}\right)}
\frac{\left(\frac{a}{k};q\right)_{n-2j}}{(q;q)_{n-2j}}
\frac{(k;q)_{n+2j}}{\left( \frac{k^2 q
}{a};q\right)_{n+2j}}\notag \\
&\phantom{assddadsASDAsdasdaS} \times \left(
\frac{-k}{a}\right)^{n-2j}
\beta_{j}\left(a,\frac{k^2}{a},q^2\right).\notag
\end{align}
}
\end{corollary}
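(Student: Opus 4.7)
My plan is to specialize Theorem \ref{gwct4} by setting $g_n = 1$, $e = a$ and $c = k^2/a$ in \eqref{gwp4}. With these choices $\alpha'_{2n}(a,k,q) = g_n \alpha_n(e, c, q^2) = \alpha_n(a, k^2/a, q^2)$ reproduces the claim for $\alpha'$, while $\alpha'_{2n+1}(a,k,q) = 0$ is already built into the statement of Theorem \ref{gwct4}; only the inner sum over $r$ in the expression for $\beta'_n(a,k,q)$ requires real work.

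After the substitutions above, the argument $(q^2 c a^2/(k^2 e))^r$ of that inner sum collapses to $q^{2r}$. I would then split every Pochhammer of the form $(x;q)_{2r}$ as $(x, xq; q^2)_r$, so that the sum becomes a series in base $q^2$. The result is a terminating very-well-poised $_8\phi_7$ in base $q^2$ with leading parameter $A = aq^{4j}$ and the five further top parameters $q^{-(n-2j)}$, $q^{1-(n-2j)}$, $kq^{n+2j}$, $kq^{n+2j+1}$, $a^2/k^2$. Whichever of the first two is of the form $q^{-2N}$ (depending on the parity of $n-2j$) plays the role of the terminating parameter, and a direct computation then verifies the balancing condition $A^2 q^2 = BCDE \cdot q^{-2N}$. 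Jackson's sum \eqref{jackson}, applied in base $q^2$, therefore evaluates the inner sum as a ratio of four $q^2$-Pochhammers of index $\lfloor n/2 \rfloor - j$.

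The main obstacle will be the concluding $q$-product manipulation. The closed-form evaluation must be multiplied by the outer prefactor $(1 - k^2 q^{4j}/a)(k/a;q)_{n-2j}(k;q)_{n+2j}(aq^2;q^2)_{2j} / \bigl((1-k^2/a)(q;q)_{n-2j}(aq;q)_{n+2j}(k^2 q^2/a;q^2)_{2j}\bigr)$ coming from \eqref{gwp4}, and then repeatedly simplified, using $(x;q)_{2r} = (x, xq; q^2)_r$ and standard splittings of mixed $q$- and $q^2$-Pochhammers, so as to extract the global prefactor $(k^2 q/a; q^2)_n / (aq; q^2)_n$ and leave inside the sum exactly the factors $(a/k;q)_{n-2j}(k;q)_{n+2j}(-k/a)^{n-2j} / \bigl((q;q)_{n-2j}(k^2 q/a;q)_{n+2j}\bigr)$ appearing in \eqref{wpn7}. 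The sign $(-k/a)^{n-2j}$ emerges in the familiar way from expanding the terminating parameter $q^{-(n-2j)}$ once Jackson's sum is applied. Beyond this careful bookkeeping, which is entirely analogous to that carried out in the proofs of Corollaries \ref{cn3}, \ref{cn33} and \ref{cn333}, no new ingredients are needed.
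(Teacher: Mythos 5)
Your proposal is correct and follows exactly the route the paper takes: the paper obtains this corollary by setting $g_n=1$, $e=a$ and $c=k^2/a$ in Theorem \ref{gwct4} and evaluating the inner sum over $r$ in \eqref{gwp4} via Jackson's sum \eqref{jackson} read in base $q^2$, precisely as you describe. The parity discussion and the verification of the balancing condition $A^2q^2=BCDE\,q^{-2N}$ are the right (and only) points needing care, and your bookkeeping plan for extracting the prefactor $(k^2q/a;q^2)_n/(aq;q^2)_n$ is sound.
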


\section{Transformation Formulae for Basic Hypergeometric Series}

We next describe a number of transformations of basic hypergeometric
series which follow from the new WP-Bailey chains described in
Corollaries \ref{cn11}, \ref{cn2} and \ref{cn333}.

Before proceeding, we remark that the ten or more WP-Bailey chains
that presently  exist potentially give rise to two hundred or more
transformations between basic hypergeometric series, most of which
have probably not been written down (by inserting the ten or more
WP-Bailey pairs and their ``duals" via Corollary \ref{warcor} into
the ten or more existing WP-Bailey chains, ignoring possible
duplication (which is why we write ``potentially")).

Inserting the unit WP-Bailey pair (see \cite{AB02} for example,
where this WP-Bailey pair, and others employed below, may be found),
\begin{align}\label{up}
\alpha_{n}(a,k)&=\frac{(q \sqrt{a}, -q
\sqrt{a},a,a/k;q)_n}{(\sqrt{a},-\sqrt{a},q,kq;q)_n}\left(\frac{k}{a}\right)^n,\\
\beta_n(a,k)&=\begin{cases} 1&n=0, \notag\\
0, &n>1,
\end{cases}
\end{align}
into \eqref{wpn11} leads (perhaps not surprisingly) to \eqref{M1}.
Upon substituting Singh's WP-Bailey pair \cite{S94},
{\allowdisplaybreaks\begin{align}\label{singhpr}
\alpha_{n}(a,k)&=\frac{(q \sqrt{a}, -q
\sqrt{a},a,\rho_1,\rho_2,a^2q/k\rho_1\rho_2;q)_n}
{(\sqrt{a},-\sqrt{a},q,a q/\rho_1,a q/\rho_2,k\rho_1\rho_2/a;q)_n}\left(\frac{k}{a}\right)^n,\\
\beta_n(a,k)&=\frac{(k \rho_1/a, k\rho_2/a, k,
aq/\rho_1\rho_2;q)_n}{(a q/\rho_1, a q/\rho_2, k \rho_1
\rho_2/a,q;q)_n}, \notag
\end{align}
} into \eqref{wpn11} we get the following transformation:
\begin{multline}\label{B127}
_{12} \phi _{11} \left [
\begin{matrix}
a, q \sqrt{a}, -q \sqrt{a}, a \sqrt{\frac{q}{k}}, -a
\sqrt{\frac{q}{k}}, \frac{a}{ \sqrt{k}},-\frac{a}{ \sqrt{k}},\rho_1,
\rho_2,\frac{k q^2}{\rho_1 \rho_2},
k q^n,  q^{-n}\\
\sqrt{a},-\sqrt{a}, \sqrt{k q},-\sqrt{k q},
q\sqrt{k},-q\sqrt{k},\frac{a q}{\rho_1},\frac{a q}{\rho_2},\frac{a
\rho_1 \rho_2}{k q}, \frac{a q^{1-n}}{k}, a q^{n+1}
\end{matrix}
; q,q \right ]\\
=\frac{1-k}{1-k q^{2n}} \frac{\left(a q,\frac{k^2q}{
a^2};q\right)_n}{\left(k,\frac{k}{a};q\right)_n} \, _{7} \phi _{6}
\left [
\begin{matrix}
\frac{a^2}{k q}, q \sqrt{\frac{a^2}{k q}}, -q \sqrt{\frac{a^2}{k
q}}, \frac{a \rho_1}{k q}, \frac{a \rho_2}{k q}, \frac{a q}{ \rho_1
\rho_2}, q^{-n}\\
\sqrt{\frac{a^2}{k q}},-\sqrt{\frac{a^2}{k q}}, \frac{a
q}{\rho_1},\frac{a q}{\rho_2}, \frac{a \rho_1 \rho_2}{k q},
\frac{a^2 q^{-n}}{k^2}
\end{matrix}
; q,q \right ].
\end{multline}
It is easy to recognize this as Bailey's $\,_{12}W_{11} \to \,_7
\phi_6$ transformation (see \cite[III.27]{GR04}). Note that
\eqref{M1} follows immediately  from \eqref{B127}, upon setting
$\rho_1= a q/\rho_2$. Inserting the pair
\begin{align}\label{pr2}
\alpha_{n}(a,k)&=\frac{(q \sqrt{a}, -q \sqrt{a},a,k/aq;q)_n}
{(\sqrt{a},-\sqrt{a},q,a^2q^2/k;q)_n}\frac{(q a^2/k;q)_{2n}}{(k;q)_{2n}}\left(\frac{k}{a}\right)^n,\\
\beta_n(a,k)&=\frac{(k^2/qa^2;q)_n}{(q;q)_n}, \notag
\end{align}
into \eqref{wpn11} leads to
\begin{multline}\label{M2}
_{10} \phi _{9} \left [
\begin{matrix}
a, q \sqrt{a}, -q \sqrt{a}, a \sqrt{\frac{q}{k}}, -a
\sqrt{\frac{q}{k}},  q\sqrt{k q},-q \sqrt{k q},\frac{a}{k q^2},
k q^n,  q^{-n}\\
\sqrt{a},-\sqrt{a}, \sqrt{k q},-\sqrt{k q}, \frac{a}{\sqrt{k
q}},\frac{-a}{\sqrt{k q}},k q^3, \frac{a q^{1-n}}{k}, a q^{n+1}
\end{matrix}
; q,q \right ]\\
=\frac{1-k}{1-k q^{2n}} \frac{\left(a q,\frac{k^2q}{
a^2};q\right)_n}{\left(k,\frac{k}{a};q\right)_n} \, _{4} \phi _{3}
\left [
\begin{matrix}
q \sqrt{\frac{a^2}{k q}}, -q \sqrt{\frac{a^2}{k q}},  \frac{a^2}{
k^2 q^3}, q^{-n}\\
\sqrt{\frac{a^2}{k q}},-\sqrt{\frac{a^2}{k q}}, \frac{a^2
q^{-n}}{k^2}
\end{matrix}
; q,q \right ].
\end{multline}
Similarly, inserting the pair
\begin{align}\label{pr3}
\alpha_{n}(a,k)&=\frac{\left(q \sqrt{a}, -q \sqrt{a},a,a
\sqrt{\frac{q}{k}},-a\sqrt{\frac{q}{k}},\frac{a}{\sqrt{k}},
-\frac{aq}{\sqrt{k}},\frac{k}{a};q\right)_n}
{\left(\sqrt{a},-\sqrt{a},q,\sqrt{kq},-\sqrt{kq},q\sqrt{k},-\sqrt{k},\frac{qa^2}{k};q\right)_n}
\left(\frac{k}{a}\right)^n,\\
\beta_n(a,k)&=\frac{\left(\sqrt{k},\frac{k^2}{a^2};q\right)_n}{(q\sqrt{k},q;q)_n},
\notag
\end{align}
into \eqref{wpn11} leads to
\begin{multline}\label{M3}
_{8} \phi _{7} \left [
\begin{matrix}
a, q \sqrt{a}, -q \sqrt{a},  -a \sqrt{\frac{q}{k}},  -q \sqrt{k
q},\frac{a}{k q},
k q^n,  q^{-n}\\
\sqrt{a},-\sqrt{a}, -\sqrt{k q}, \frac{-a}{\sqrt{k q}},k q^2,
\frac{a q^{1-n}}{k}, a q^{n+1}
\end{matrix}
; q,q \right ]\\
=\frac{1-k}{1-k q^{2n}} \frac{\left(a q,\frac{k^2q}{
a^2};q\right)_n}{\left(k,\frac{k}{a};q\right)_n} \, _{5} \phi _{4}
\left [
\begin{matrix}
q \sqrt{\frac{a^2}{k q}}, -q \sqrt{\frac{a^2}{k q}},
\frac{a}{\sqrt{k q}}, \frac{a^2}{
k^2 q^2}, q^{-n}\\
\sqrt{\frac{a^2}{k q}},-\sqrt{\frac{a^2}{k q}},a \sqrt{\frac{q}{k}},
\frac{a^2 q^{-n}}{k^2}
\end{matrix}
; q,q \right ].
\end{multline}

Substituting the pair
\begin{align}\label{pr4}
\alpha_{n}(a,k)&=\begin{cases}0,&\text{if $n$ is
odd},\\
\frac{\left(q^2 \sqrt{a}, \,-q^2
\sqrt{a},\,a,\,a^2/k^2;\,q^2\right)_{n/2}}
{\left(\sqrt{a},\,-\sqrt{a},\,q^2,\,q^2k^2/a;\,q^2\right)_{n/2}}
\left(\frac{k}{a}\right)^n,&\text{if $n$ is even},
\end{cases}\\
\beta_n(a,k)&=\frac{\left(k,\,k\sqrt{q/a},\,-k\sqrt{q/a},\,a/k;\,q\right)_n}
{(\sqrt{aq},\,-\sqrt{aq},\,q k^2/a,\,q;\,q)_n}
\left(\frac{-k}{a}\right)^n, \notag
\end{align}
into \eqref{wpn11} leads to
\begin{multline}\label{M4}
\text{\negthickspace\tiny{$_{16}W_{15}\left(a;\frac{a}{\sqrt{k}},\frac{-a}{\sqrt{k}},a\sqrt{\frac{q}{k}},-a\sqrt{\frac{q}{k}},\frac{aq}{\sqrt{k}},\frac{-aq}{\sqrt{k}},
aq \sqrt{\frac{q}{k}},-aq \sqrt{\frac{q}{k}},\frac{k^2q^2}{a^2},kq^n,kq^{n+1},q^{-n},q^{1-n};q^2,q^2\right)$}}\\
\negthickspace=\frac{1-k}{1-k q^{2n}} \frac{\left(a q,\frac{k^2q}{
a^2};q\right)_n}{\left(k,\frac{k}{a};q\right)_n} \, _{7} \phi _{6}
\left [
\begin{matrix}
\frac{aq}{\sqrt{k q}}, -\frac{aq}{\sqrt{k q}}, \frac{a^2}{ k q},
\frac{a}{k}\sqrt{\frac{a}{q}},-\frac{a}{k}\sqrt{\frac{a}{q}},
\frac{k q}{a},q^{-n}\\
\frac{a}{\sqrt{k q}},-\frac{a}{\sqrt{k
q}},\sqrt{aq},-\sqrt{aq},\frac{a^3}{k^2q}, \frac{a^2 q^{-n}}{k^2}
\end{matrix}
; q,q \right ],
\end{multline}
a result reminiscent of (3.14) in \cite{AB02}.

We next turn to two WP-Bailey pairs found by Bressoud \cite{B81}.
Inserting the pair
\begin{align}\label{Bpr2}
\alpha_n(a,k)&=\frac{1-\sqrt{a}\,q^n}{1-\sqrt{a}}\,
\frac{\left(\sqrt{a},\frac{a \sqrt{q}}{k};\sqrt{q}\right)_n}
{\left(\sqrt{q},\frac{k}{ \sqrt{a}};\sqrt{q}\right)_n}
\left(\frac{k}{a \sqrt{q}} \right)^n,\\
\beta_n(a,k)&=\frac{\left(k,\frac{a q}{k};q\right)_n}{\left(q,
\frac{k^2}{a};q\right)_n}
\frac{\left(\frac{-k}{\sqrt{a}};\sqrt{q}\right)_{2n}}{\left( -
\sqrt{a q};\sqrt{q}\right)_{2n}}\left(\frac{k}{a \sqrt{q}}
\right)^n, \notag
\end{align}
into \eqref{wpn11}, and then replacing $\sqrt{a}$ with $a$,
$\sqrt{k}$ with $k$ and $\sqrt{q}$ with $q$, gives
{\allowdisplaybreaks \begin{multline}\label{M5} \sum_{j=0}^{n}
\frac{\left(q\sqrt{a}, -q\sqrt{a},a,\frac{k^2q^3}{a^2};q\right)_j}
{\left(\sqrt{a},-\sqrt{a},\frac{a^3}{q^2k^2},q;q\right)_j}
\frac{\left(\frac{a^2}{k},-\frac{a^2}{k},\frac{a^2q}{k},
-\frac{a^2q}{k},k^2q^{2n},q^{-2n};q^2\right)_j q^j}{\left(k q,-k
q,kq^2,-kq^2,\frac{a^2q^{2-2n}}{k^2},a^2q^{2+n};q^2\right)_j}\\
=\frac{1-k^2}{1-k^2 q^{4n}} \frac{\left(a^2 q^2,\frac{k^4q^2}{
a^4};q^2\right)_n}{\left(k^2,\frac{k^2}{a^2};q^2\right)_n}\\ \times
\, _{7} \phi _{6} \left [
\begin{matrix}
\frac{a^2q^2}{k q}, -\frac{a^2q^2}{k q}, \frac{a^4}{ k^2 q^2},
\frac{-a^3}{k^2q^2},\frac{-a^3}{k^2q},
\frac{k^2 q^4}{a^2},q^{-2n}\\
\frac{a^2}{k q},-\frac{a^2}{k q},-a q,-a q^2,\frac{a^6}{k^4q^4},
\frac{a^4 q^{-2n}}{k^4}
\end{matrix}
; q^2,\frac{a^2}{k^2q} \right ],
\end{multline}}
an identity similar to (4.10) in \cite{AB02}. In a similar manner,
inserting Bressoud's WP-Bailey pair
\begin{align}\label{Bpr3}
\alpha_n(a,k)&=\frac{1-a\,q^{2n}}{1-a}\,
\frac{\left(\sqrt{a},\frac{a}{k};\sqrt{q}\right)_n}
{\left(\sqrt{q},k\sqrt{\frac{q}{ a}};\sqrt{q}\right)_n}
\left(\frac{k}{a \sqrt{q}} \right)^n,\\
\beta_n(a,k)&=\frac{\left(k,\frac{a}{k},-k\sqrt{\frac{q}{a}},-\frac{k
q}{\sqrt{a}};q\right)_n}{\left(q, \frac{q k^2}{a},-\sqrt{a},-\sqrt{a
q};q\right)_n} \left(\frac{k}{a \sqrt{q}} \right)^n,\notag
\end{align}
into \eqref{wpn11}, and likewise replacing $\sqrt{a}$ with $a$,
$\sqrt{k}$ with $k$ and $\sqrt{q}$ with $q$, gives
{\allowdisplaybreaks \begin{multline}\label{M6} \sum_{j=0}^{n}
\frac{\left(a,\frac{k^2q^2}{a^2};q\right)_j}
{\left(\frac{a^3}{qk^2},q;q\right)_j} \frac{\left(a q^2, -a
q^2,\frac{a^2}{k},-\frac{a^2}{k},\frac{a^2q}{k},
-\frac{a^2q}{k},k^2q^{2n},q^{-2n};q^2\right)_j q^j}{\left(a,-a,k
q,-k
q,kq^2,-kq^2,\frac{a^2q^{2-2n}}{k^2},a^2q^{2+n};q^2\right)_j}\\
=\frac{1-k^2}{1-k^2 q^{4n}} \frac{\left(a^2 q^2,\frac{k^4q^2}{
a^4};q^2\right)_n}{\left(k^2,\frac{k^2}{a^2};q^2\right)_n}\\ \times
\, _{7} \phi _{6} \left [
\begin{matrix}
\frac{a^2q}{k}, -\frac{a^2q}{k}, \frac{a^4}{ k^2 q^2},
\frac{-a^3}{k^2q},\frac{-a^3}{k^2},
\frac{k^2 q^2}{a^2},q^{-2n}\\
\frac{a^2}{k q},-\frac{a^2}{k q},-a,-a q,\frac{a^6}{k^4q^2},
\frac{a^4 q^{-2n}}{k^4}
\end{matrix}
; q^2,\frac{a^2}{k^2q} \right ].
\end{multline}
}

Finally, we apply the corollary to two WP-Bailey pairs found by the
present authors in \cite{MZ07b}:
\begin{align}\label{mz01}
\alpha_n^{(1)}(a,k)&=\frac{(q a^2/k^2;q)_n}{(q,q)_n}\left(
\frac{k}{a}\right)^n,\\
\beta_n^{(1)}(a,k)&=\frac{(q
a/k,k;q)_n}{(k^2/a,q,q)_n}\frac{(k^2/a;q)_{2n}}{(a q,q)_{2n}}.\notag
\end{align}

\begin{align}\label{mz02}
\alpha_n^{(2)}(a,k)&=\frac{(a, \,q \sqrt{a},\, -q \sqrt{a},
\,k/a,\,a \sqrt{q/k}, \, -a \sqrt{q/k};q)_n}
{(\sqrt{a},\,-\sqrt{a},\, q
a^2/k,\,\sqrt{qk},\, -\sqrt{qk},\,q;q)_n}\,(-1)^n,\\
\beta_n^{(2)}(a,k)&=\begin{cases}
\displaystyle{\frac{(k,k^2/a^2;q^2)_{n/2}}{(q^2, q^2
a^2/k;q^2)_{n/2}}}, & n \text{ even},\\
0,& n \text{ odd}.
\end{cases}\notag
\end{align}

Note that the second pair may also be derived by applying Corollary
\ref{warcor} to the pair at \eqref{pr4}. These WP-Bailey pairs
inserted in  \eqref{wpn11} lead, respectively to the transformations
{\allowdisplaybreaks
\begin{multline}\label{M7} _{7} \phi _{6} \left
[
\begin{matrix}
\frac{a}{\sqrt{k}}, \frac{-a}{\sqrt{k}},a \sqrt{\frac{q}{k}}, -a
\sqrt{\frac{q}{k}},  \frac{k^2q^3}{a^2},
k q^n,  q^{-n}\\
 \sqrt{k q}, -\sqrt{k q},  q\sqrt{k}, -q\sqrt{k},
\frac{a q^{1-n}}{k}, a q^{n+1}
\end{matrix}
; q,q\right ]\\
=\frac{1-k}{1-k q^{2n}} \frac{\left(a q,\frac{k^2q}{
a^2};q\right)_n}{\left(k,\frac{k}{a};q\right)_n}\\
\times \, _{9} \phi _{8} \left [
\begin{matrix}
\frac{a q}{\sqrt{k q}}, - \frac{a q}{\sqrt{k q}}, \frac{a^2}{k q},
\frac{a^{3/2}}{ k q}, \frac{-a^{3/2}}{ k q},\frac{a^{3/2}}{ k
\sqrt{q}}, \frac{-a^{3/2}}{
k \sqrt{q}},\frac{k q^2}{a},q^{-n}\\
\frac{a}{\sqrt{k q}}, - \frac{a}{\sqrt{k q}},\sqrt{a q},-\sqrt{a q},
q\sqrt{a},-q\sqrt{a}, \frac{a^3}{k^2q^2},\frac{a^2 q^{-n}}{k^2}
\end{matrix}
; q,q \right ],
\end{multline}}
and {\allowdisplaybreaks
\begin{multline}\label{M8} _{8} \phi _{7}
\left [
\begin{matrix}
a,q\sqrt{a},-q\sqrt{a},a \sqrt{\frac{q}{k}}, -a \sqrt{\frac{q}{k}},
\frac{a}{k q},
k q^n,  q^{-n}\\
\sqrt{a},-\sqrt{a}, \sqrt{k q}, -\sqrt{k q}, q^2 k,\frac{a
q^{1-n}}{k}, a q^{n+1}
\end{matrix}
; q,-\frac{k q^2}{a}  \right ]\\
=\frac{1-k}{1-k q^{2n}} \frac{\left(a q,\frac{k^2q}{
a^2};q\right)_n}{\left(k,\frac{k}{a};q\right)_n}
\,_{6} \phi _{5} \left [
\begin{matrix}
\frac{a q^2}{\sqrt{k q}}, - \frac{a q^2}{\sqrt{k q}}, \frac{a^2}{k
q}, \frac{a^2}{k^2q^2},q^{-n},q^{1-n}\\
\frac{a}{\sqrt{k q}}, - \frac{a}{\sqrt{k q}}, q^3k,\frac{a^2
q^{-n}}{k^2},\frac{a^2 q^{1-n}}{k^2}
\end{matrix}
; q^2,q^2 \right ].
\end{multline}
}

Likewise, inserting the WP-Bailey pairs at \eqref{singhpr},
\eqref{pr2}, \eqref{pr3}, \eqref{pr4}, \eqref{Bpr2}, \eqref{Bpr3},
\eqref{mz01} and \eqref{mz02}, respectively, into the WP-Bailey
chain at \eqref{wpn5} leads, respectively, to the following
transformations:
\begin{multline}\label{singh2}
_{10} W _{9} \left(\frac{a k}{q};-a,-a q,\frac{k \rho_1}{a q},
\frac{k \rho_2}{a q},\frac{a^2 q^2}{\rho_1 \rho_2},k^2 q^{2n},
q^{-2n};q^2, \frac{q^2 a}{k}
 \right)\\
=\frac{\left(k/a,-q;q\right)_n}{\left(-k,a q;q\right)_n} \frac{(a k
q;q^2)_n}{(k q/a;q^2)_n}\\
\times \sum_{j=0}^{n} \frac{(k q^n,q^{-n};q)_j} {\left( \frac{a
q^{1-n}}{k},a q^{1+n};q\right)_j}
\frac{(a^2,aq^2,\rho_1,\rho_2,\frac{a^3q^3}{k \rho_1 \rho_2};q^2)_j}
{\left(a,\frac{a^2 q^2}{\rho_1},\frac{a^2 q^2}{\rho_2},\frac{k
\rho_1 \rho_2}{a q},q^2;q^2\right)_j}\,q^j;
\end{multline}
 {\allowdisplaybreaks\begin{multline}\label{pr22} _{7} \phi _{6}
\left[
\begin{matrix} q^2\sqrt{\frac{a k}{q}}, -q^2\sqrt{\frac{a k}{q}},-a,
-a q, \frac{k^2}{a^2 q^4},k^2
q^{2n},q^{-2n}\\
\sqrt{\frac{a k}{q}}, -\sqrt{\frac{a k}{q}},-k, -k q, a k
q^{2n+1},\frac{a q^{1-2n}}{k}
\end{matrix};q^2, \frac{q^2 a}{k} \right]\\
=\frac{\left(k/a,-q;q\right)_n}{\left(-k,a q;q\right)_n} \frac{(a k
q;q^2)_n}{(k q/a;q^2)_n}\\
\times \sum_{j=0}^{n} \frac{(k q^n,q^{-n};q)_j} {\left( \frac{a
q^{1-n}}{k},a q^{1+n};q\right)_j} \frac{(a^2,aq^2,\frac{k}{a
q^3};q^2)_j} {\left(a,\frac{a^3 q^5}{k},q^2;q^2\right)_j}
\frac{\left(\frac{q^3a^3}{k},\frac{q^5a^3}{k};q^4\right)_j}
{\left(\frac{a k}{q},a k q;q^4\right)_j}\,q^j;
\end{multline}}
\allowdisplaybreaks{
\begin{multline}\label{pr32}
_{6} \phi _{5} \left[ \begin{matrix} -q^2\sqrt{\frac{a k}{q}},-a, -a
q, \frac{k^2}{a^2 q^2},k^2
q^{2n},q^{-2n}\\
 -\sqrt{\frac{a k}{q}},-k, -k q, a k
q^{2n+1},\frac{a q^{1-2n}}{k}
\end{matrix};q^2, \frac{q^2 a}{k} \right]\\
=\frac{\left(k/a,-q;q\right)_n}{\left(-k,a q;q\right)_n} \frac{(a k
q;q^2)_n}{(k q/a;q^2)_n}\\
\times \sum_{j=0}^{n} \frac{(k q^n,q^{-n};q)_j} {\left( \frac{a
q^{1-n}}{k},a q^{1+n};q\right)_j} \frac{\left(a^2,aq^2,\frac{k}{a
q},\sqrt{\frac{a^3q}{k}},-\sqrt{\frac{a^3q^5}{k}};q^2\right)_j}
{\left(a,\frac{a^3 q^3}{k},\sqrt{a k q^3},-\sqrt{\frac{a
k}{q}},q^2;q^2\right)_j} \frac{\left(\frac{q^3a^3}{k};q^4\right)_j}
{\left(a k q;q^4\right)_j}\,q^j;
\end{multline}
}
\begin{multline}\label{pr42}
_8W_7\left(\frac{a k}{q};-a,k,\frac{a
q}{k},k^2q^{2n},q^{-2n};q^2,-q\right)=\frac{\left(k/a,-q;q\right)_n}{\left(-k,a
q;q\right)_n} \frac{(a k
q;q^2)_n}{(k q/a;q^2)_n}\\
\times _{10}W_9 \left(a;-a,\frac{a q}{k},-\frac{a q}{k}, k q^{n}, k
q^{1+n}, q^{1-n},q^{-n};q^2,q^2 \right);
\end{multline}
\begin{multline}\label{Bpr22}
_8W_7\left(\frac{a k}{q};-a,-\frac{k}{q},\frac{a
q^3}{k},k^2q^{2n},q^{-2n};q^2,1\right)=\frac{\left(k/a,-q;q\right)_n}{\left(-k,a
q;q\right)_n} \frac{(a k
q;q^2)_n}{(k q/a;q^2)_n}\\
\times _{8}W_7 \left(a;i\sqrt{a},-i\sqrt{a},\frac{a q^2}{k}, k
q^{n},q^{-n};q,1 \right);
\end{multline}
\begin{multline}\label{Bpr23}
_6W_5\left(\frac{a k}{q};\frac{a
q}{k},k^2q^{2n},q^{-2n};q^2,1\right)=\frac{\left(k/a,-q;q\right)_n}{\left(-k,a
q;q\right)_n} \frac{(a k
q;q^2)_n}{(k q/a;q^2)_n}\\
\times _{6}W_5 \left(a;\frac{a q}{k}, k q^{n},q^{-n};q,1 \right);
\end{multline}
\begin{multline}\label{MZ012}
_{10}W_9\left(\frac{a k}{q};-a,k,\frac{k}{q},-\frac{k}{q},\frac{a
q^3}{k},k^2q^{2n},q^{-2n};q^2,\frac{q^2a}{k}\right)=\\\frac{\left(k/a,-q;q\right)_n}{\left(-k,a
q;q\right)_n} \frac{(a k q;q^2)_n}{(k q/a;q^2)_n} \sum_{j=0}^{n}
\frac{(1+a)\,\,(aq^2/k,-aq^2/k,kq^n,q^{-n};q)_j}{(1+a
q^{2j})(aq^{1-n}/k,aq^{1+n},-q,q;q)_j}\,q^j;
\end{multline}
\begin{multline}\label{MZ022}
_{12}W_{11}\left(\text{\small{$\frac{a k}{q};-a,-a
q,-aq^2,-aq^3,\frac{k^2}{a^2q^2},k^2q^{2n},k^2q^{2+2n},
q^{2-2n},q^{-2n};q^4,\frac{q^4a^2}{k^2}$}}\right)\\=\frac{\left(k/a,-q;q\right)_n}{\left(-k,a
q;q\right)_n} \frac{(a k q;q^2)_n}{(k q/a;q^2)_n}\phantom{asdadasdasdasd}\phantom{asdadasdasdasd}\\
\times \sum_{j=0}^{n} \frac{(kq^n,q^{-n};q)_j}
{\left(\frac{aq^{1-n}}{k},aq^{1+n};q\right)_j}\,\frac{\left(a^2,aq^2,\frac{k}{a
q};q^2\right)_j} {\left(a,\frac{a^3q^3}{k},q^2;q^2\right)_j}\,
\frac{\left(\frac{a^3q^3}{k};q^4\right)_j} {\left(a k
q;q^4\right)_j} \left(-\frac{q^2a}{k} \right)^j.
\end{multline}

In a similar manner, inserting the WP-Bailey pairs at
\eqref{singhpr}, \eqref{pr2}, \eqref{pr3}, \eqref{pr4},
\eqref{Bpr2}, \eqref{Bpr3}, \eqref{mz01} and \eqref{mz02},
respectively, into the WP-Bailey chain at \eqref{wpn6aa} leads,
respectively, to the following transformations: 
\begin{multline}\label{BT3.742}
\, _{10} \phi _{9} \left [
\begin{matrix}a,q \sqrt{a}, -q\sqrt{a},\rho_1,\rho_2,\frac{a^3q}{k\rho_1\rho_2},\sqrt{k}q^n,-\sqrt{k}q^n,q^{-n},-q^{-n}\\
\sqrt{a},-\sqrt{a},\frac{a q}{\rho_1},\frac{a
q}{\rho_2},\frac{k\rho_1\rho_2}{a^2},\frac{aq^{1-n}}{\sqrt{k}},-\frac{aq^{1-n}}{\sqrt{k}},aq^{n+1},-aq^{n+1}
\end{matrix}
; q,q^2 \right ]\\
=\frac{\left(a^2q^2,\frac{a^2}{k};q^2\right)_n\left(\frac{-k}{a};q\right)_{2n}}
{\left(\frac{k}{a^2},\frac{k^2q^2}{a^2};q^2\right)_n\left(-qa;q\right)_{2n}}\left(\frac{kq}{a^2}\right)^n
\phantom{asdadasdasdasd}\\
\times \sum_{j=0}^{n}
\frac{\left(1-\frac{k^2}{a^2}q^{4j}\right)\left(kq^{2n},q^{-2n};q^2\right)_{j}\left(\frac{k
\rho_1}{a^2}, \frac{k\rho_2}{a^2},
\frac{k}{a},\frac{aq}{\rho_1\rho_2};q\right)_j}
{\left(1-\frac{k^2}{a^2}\right)\left(\frac{k^2q^{2n+2}}{a^2},\frac{kq^{2-2n}}{a^2};q^2\right)_{j}\left(\frac{a
q}{\rho_1}, \frac{a q}{\rho_2}, \frac{k
\rho_1\rho_2}{a^2},q;q\right)_j}\, q^{j};
\end{multline}
\begin{multline}\label{BT3.744}
\sum_{j=0}^{n} \frac{\left(a,q \sqrt{a}, -q
\sqrt{a},\frac{k}{a^2q};q\right)_j\left(q^{-2n},kq^{2n};q^2\right)_{j}\left(\frac{q
a^3}{k};q\right)_{2j}}
{\left(\sqrt{a},-\sqrt{a},q,\frac{a^3q^2}{k};q\right)_j\left(\frac{a^2q^{2-2n}}{k},a^2q^{2n+2};q^2\right)_{j}\left(\frac{k}{a};q\right)_{2j}}
\,q^{2j}\\
=
\frac{\left(a^2q^2,\frac{a^2}{k};q^2\right)_n\left(\frac{-k}{a};q\right)_{2n}}
{\left(\frac{k}{a^2},\frac{k^2q^2}{a^2};q^2\right)_n\left(-qa;q\right)_{2n}}
\left(\frac{kq}{a^2}\right)^n\phantom{asdadasdasdasd}\\
\times \sum_{j=0}^{n}
\frac{\left(1-\frac{k^2}{a^2}q^{4j}\right)\left(kq^{2n},q^{-2n};q^2\right)_{j}\left(\frac{k^2}{qa^4};q\right)_j}
{\left(1-\frac{k^2}{a^2}\right)\left(\frac{k^2q^{2n+2}}{a^2},\frac{kq^{2-2n}}{a^2};q^2\right)_{j}\left(q;q\right)_j}
q^{j};
\end{multline}
\begin{multline}\label{BT3.746}
\text{\scriptsize{$_{12}\phi
_{11}\left[\begin{matrix}a,q\sqrt{a},-q\sqrt{a},a\sqrt{\frac{qa}{k}},-a\sqrt{\frac{qa}{k}},a\sqrt{\frac{a}{k}},-aq\sqrt{\frac{a}{k}},\frac{k}{a^2},q^{-n},-q^{-n},\sqrt{k}q^n,-\sqrt{k}q^n
\\
\sqrt{a},-\sqrt{a},\sqrt{\frac{kq}{a}},-\sqrt{\frac{kq}{a}},q\sqrt{\frac{k}{a}},-\sqrt{\frac{k}{a}},\frac{qa^3}{k},\frac{aq^{1-n}}{\sqrt{k}},-\frac{aq^{1-n}}{\sqrt{k}},aq^{n+1},-aq^{n+1}
\end{matrix};q,q^2\right]$}}\\
=\frac{\left(a^2q^2,\frac{a^2}{k};q^2\right)_n\left(\frac{-k}{a};q\right)_{2n}}
{\left(\frac{k}{a^2},\frac{k^2q^2}{a^2};q^2\right)_n\left(-qa;q\right)_{2n}}\left(\frac{kq}{a^2}\right)^n
\phantom{asdadasdasdasd}\\
\times \sum_{j=0}^{n}
\frac{\left(1-\frac{k^2}{a^2}q^{4j}\right)\left(kq^{2n},q^{-2n};q^2\right)_{j}}
{\left(1-\frac{k^2}{a^2}\right)\left(\frac{k^2q^{2n+2}}{a^2},\frac{kq^{2-2n}}{a^2};q^2\right)_{j}}
\frac{\left(\sqrt{\frac{k}{a}},\frac{k^2}{a^4};q\right)_j}{\left(q\sqrt{\frac{k}{a}},q;q\right)_j}\,q^j;
\end{multline}
{\allowdisplaybreaks
\begin{multline}\label{BT3.748}
\sum_{j=0}^{n}
\frac{\left(q^{-4n},kq^{4n};q^2\right)_{2j}\left(a,\,q^2 \sqrt{a},
\,-q^2\sqrt{a},\,\frac{a^4}{k^2};\,q^2\right)_{j}}
{\left(\frac{a^2q^{2-4n}}{k},a^2q^{4n+2};q^2\right)_{2j}\left(q^2,\,\sqrt{a},\,-\sqrt{a},\,\frac{q^2k^2}{a^3};\,q^2\right)_{j}}
\,q^{4j}\\
=
\frac{\left(a^2q^2,\frac{a^2}{k};q^2\right)_{2n}\left(\frac{-k}{a};q\right)_{4n}}
{\left(\frac{k}{a^2},\frac{k^2q^2}{a^2};q^2\right)_{2n}\left(-qa;q\right)_{4n}}
\left(\frac{kq}{a^2}\right)^{2n} \phantom{asdadasdasdasd}\\
\times \sum_{j=0}^{2n}
\frac{\left(1-\frac{k^2}{a^2}q^{4j}\right)\left(kq^{4n},q^{-4n};q^2\right)_{j}\left(\frac{k}{a},\,\frac{k\sqrt{a
q}}{a^2},\,-\frac{k\sqrt{a q}}{a^2},\,\frac{a^2}{k};\,q\right)_j}
{\left(1-\frac{k^2}{a^2}\right)\left(\frac{k^2q^{4n+2}}{a^2},\frac{kq^{2-4n}}{a^2};q^2\right)_{j}\left(\sqrt{aq},\,-\sqrt{aq},\,\frac{q
k^2}{a^3},\,q;\,q\right)_j} \left(\frac{-kq}{a^2}\right)^j;
\end{multline}
}
\begin{multline}\label{BT3.7410}
\sum_{j=0}^{n}
\frac{\left(1-\sqrt{a}\,q^{2n}\right)\left(q^{-4n},kq^{4n};q^4\right)_{j}\left(\sqrt{a},\frac{a^2
q}{k};q\right)_j}
{\left(1-\sqrt{a}\right)\left(\frac{a^2q^{4-4n}}{k},a^2q^{4n+4};q^4\right)_{j}\left(q,\frac{k\sqrt{a}}{
a^2};q\right)_j}\, q^{3j}\\
=
\frac{\left(a^2q^4,\frac{a^2}{k};q^4\right)_n\left(\frac{-k}{a};q^2\right)_{2n}}
{\left(\frac{k}{a^2},\frac{k^2q^4}{a^2};q^4\right)_n\left(-q^2a;q^2\right)_{2n}}
\left(\frac{kq^2}{a^2}\right)^n\phantom{asdadasdasdasd}
\\
\times \sum_{j=0}^{n}
\frac{\left(1-\frac{k^2}{a^2}q^{8j}\right)\left(kq^{4n},q^{-4n};q^4\right)_{j}\left(\frac{k}{a},\frac{a^2
q^2}{k};q^2\right)_j\left(\frac{-k\sqrt{a}}{a^2};q\right)_{2j}}
{\left(1-\frac{k^2}{a^2}\right)\left(\frac{k^2q^{4n+4}}{a^2},\frac{kq^{4-4n}}{a^2};q^4\right)_{j}\left(q^2,\frac{k^2}{a^3};q^2\right)_j\left(
-q\sqrt{a};q\right)_{2j}} \left(\frac{kq}{a^2}\right)^{j};
\end{multline}
\begin{multline}\label{BT3.7412}
\sum_{j=0}^{n}
\frac{\left(1-aq^{4j}\right)\left(q^{-4n},kq^{4n};q^4\right)_{j}\left(\sqrt{a},\frac{a^2}{k};q\right)_j}
{\left(1-a\right)\left(\frac{a^2q^{4-4n}}{k},a^2q^{4n+4};q^4\right)_{j}\left(q,\frac{kq\sqrt{a}}{a^2};q\right)_j}
\,q^{3j}\\
 =
\frac{\left(a^2q^4,\frac{a^2}{k};q^4\right)_n\left(\frac{-k}{a};q^2\right)_{2n}}
{\left(\frac{k}{a^2},\frac{k^2q^4}{a^2};q^4\right)_n\left(-q^2a;q^2\right)_{2n}}
\left(\frac{kq^2}{a^2}\right)^n\phantom{asdadasdasdasd}
\\
\times  \sum_{j=0}^{n}
\frac{\left(1-\frac{k^2}{a^2}q^{8j}\right)\left(kq^{4n},q^{-4n};q^4\right)_{j}\left(\frac{k}{a},\frac{a^2}{k},-\frac{kq\sqrt{a}}{a^2},-\frac{kq^2\sqrt{a}}{a^2};q^2\right)_j}
{\left(1-\frac{k^2}{a^2}\right)\left(\frac{k^2q^{4n+4}}{a^2},\frac{kq^{4-4n}}{a^2};q^4\right)_{j}\left(q^2,
\frac{q^2 k^2}{a^3},-\sqrt{a},-q\sqrt{a};q^2\right)_j} \left(
\frac{kq}{a^2 }\right)^{j};
\end{multline}
\begin{multline}\label{BT3.7414}
 _{5} \phi _{4} \left [
\begin{matrix}\frac{qa^4}{k^2},q^{-n},-q^{-n},\sqrt{k}q^n,-\sqrt{k}q^n
\\
\frac{aq^{1-n}}{\sqrt{k}},-\frac{aq^{1-n}}{\sqrt{k}},aq^{n+1},-aq^{n+1}
\end{matrix}
; q,q^2 \right ]=
\frac{\left(a^2q^2,\frac{a^2}{k};q^2\right)_{n}\left(\frac{-k}{a};q\right)_{2n}}
{\left(\frac{k}{a^2},\frac{k^2q^2}{a^2};q^2\right)_{n}\left(-qa;q\right)_{2n}}
\\
\times \left(\frac{kq}{a^2}\right)^{n}\sum_{j=0}^{n}
\frac{\left(1-\frac{k^2}{a^2}q^{4j}\right)\left(kq^{2n},q^{-2n};q^2\right)_{j}\left(\frac{qa^2}{k},\frac{k}{a};q\right)_j\left(\frac{k^2}{a^3};q\right)_{2j}}
{\left(1-\frac{k^2}{a^2}\right)\left(\frac{k^2q^{2n+2}}{a^2},\frac{kq^{2-2n}}{a^2};q^2\right)_{j}\left(\frac{k^2}{a^3},q;q\right)_j\left(a
q;q\right)_{2j}} \,q^{j};
\end{multline}
{\allowdisplaybreaks
\begin{multline}\label{BT3.7415}
\text{\footnotesize{$_{10}\phi
_{9}\left[\begin{matrix}a,q\sqrt{a},-q
\sqrt{a},\frac{k}{a^2},a\sqrt{\frac{aq}{k}},-a\sqrt{\frac{aq}{k}},q^{-2n},-q^{-2n},\sqrt{k}q^{2n},-\sqrt{k}q^{2n}
\\
\sqrt{a},-\sqrt{a},\frac{qa^3}{k},\sqrt{\frac{qk}{a}},-\sqrt{\frac{qk}{a}},q,\frac{aq^{1-2n}}{\sqrt{k}},-\frac{aq^{1-2n}}{\sqrt{k}},aq^{2n+1},-aq^{2n+1}
\end{matrix}
; q,\frac{-q^2a^2}{k}\right]$}}\\
=\frac{\left(a^2q^2,\frac{a^2}{k};q^2\right)_{2n}\left(\frac{-k}{a};q\right)_{4n}}
{\left(\frac{k}{a^2},\frac{k^2q^2}{a^2};q^2\right)_{2n}\left(-qa;q\right)_{4n}}\left(\frac{kq}{a^2}\right)^{2n}
\phantom{asdadasdasdasd}\\
\times \sum_{j=0}^{n}
\frac{\left(1-\frac{k^2}{a^2}q^{8j}\right)\left(kq^{4n},q^{-4n};q^2\right)_{2j}\left(\frac{k}{a},\frac{k^2}{a^4};q^2\right)_{j}}
{\left(1-\frac{k^2}{a^2}\right)\left(\frac{k^2q^{4n+2}}{a^2},\frac{kq^{2-4n}}{a^2};q^2\right)_{2j}\left(q^2,
\frac{q^2a^3}{k};q^2\right)_{j}}\, q^{2j}.
\end{multline}
}

\section{Bailey-Type Chains}
Just as Liu and Ma did in \cite{LM08}, we can let $k \to 0$ to get
regular Bailey chains, but unfortunately none are new.

\begin{corollary}\label{cn4} If $(\alpha_{n}(a,q),\,\beta_{n}(a,q))$ are a Bailey pair
w.r.t. $a$, then so is $(\alpha_{n}'(a,q)$, $\beta_{n}'(a,q))$,
where {\allowdisplaybreaks
\begin{align}\label{pn1}
\alpha_{n}'(a,q)&=\frac{(1+a)q^n}{1+a q^{2n}}\alpha_{n}(a^2,q^2),\\
\beta_{n}'(a,q)&=\sum_{j=0}^{n} \frac{(-a;q)_{2j}}{(q^2;q^2)_{n-j}}
 q^j\beta_{j}(a^2,q^2).\notag
\end{align}
}
\end{corollary}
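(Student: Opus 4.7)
The plan is to specialize the WP-Bailey chain \eqref{wpn5} of Corollary \ref{cn2} by letting $k \to 0$, mirroring Liu and Ma's approach in \cite{LM08}. The defining equation \eqref{WPpair} collapses to the standard Bailey pair equation $\beta_n(a,q) = \sum_{j=0}^n \alpha_j(a,q)/\bigl((q;q)_{n-j}(aq;q)_{n+j}\bigr)$ as $k \to 0$, because $(k/a;q)_{n-j}$ and $(k;q)_{n+j}$ both tend to $1$. Hence the input WP-Bailey pair with parameters $(a^2, ak/q)$ in base $q^2$ degenerates to the ordinary Bailey pair $(\alpha_n(a^2,q^2),\beta_n(a^2,q^2))$ in base $q^2$ with respect to $a^2$, and the output side of \eqref{wpn5} will produce the ordinary Bailey pair with respect to $a$ in base $q$ that we want.

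I would then take $k \to 0$ termwise in \eqref{wpn5}. For $\alpha_n'$ the substitution is immediate and yields $(1+a)q^n\alpha_n(a^2,q^2)/(1+aq^{2n})$, matching the claim. For $\beta_n'$, I would track each factor of the summand: the prefactor $(1 - akq^{4j-1})/(1 - ak/q)$, the ratio $(-k;q)_{2j}^{-1}$, the ratio $(k^2;q^2)_{n+j}/(akq;q^2)_{n+j}$, and the Pochhammer $(qk/a;q^2)_{n-j}$ all tend to $1$, while $(-aq;q)_{2j}/(q^2;q^2)_{n-j}$ together with the weight $(1+a)q^j/(1+aq^{2j})$ survive.

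The only remaining ingredient is the identity $(1+a)\,(-aq;q)_{2j}/(1+aq^{2j}) = (-a;q)_{2j}$, which follows by writing out both telescoping products. Applying this converts the surviving summand into $(-a;q)_{2j} q^j/(q^2;q^2)_{n-j}$, producing the stated $\beta_n'$. There is no substantive obstacle: the argument is careful bookkeeping of a limit plus a single routine Pochhammer manipulation. The only point requiring attention is checking that every factor containing $k$ has a finite, nonzero limit (equivalently, that no cancellation of $0/0$ type arises), which a quick inspection of each Pochhammer confirms.
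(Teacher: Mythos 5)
Your proposal is correct and follows essentially the same route as the paper, whose entire proof is ``let $k \to 0$ in Corollary \ref{cn2} and rearrange''; you have simply supplied the bookkeeping, including the key simplification $(1+a)(-aq;q)_{2j}/(1+aq^{2j}) = (-a;q)_{2j}$. No gaps.
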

\begin{proof}
let $k \to 0$ in Corollary \ref{cn2} and rearrange.
\end{proof}
Remark: This is the Bailey chain (D4) of Bressoud, Ismail and
Stanton in \cite{BIS00}.

\begin{corollary}\label{cn44} If $(\alpha_{n}(a,q),\,\beta_{n}(a,q))$ are a Bailey pair
w.r.t. $a$, then so is $(\alpha_{n}'(a,q)$, $\beta_{n}'(a,q))$,
where {\allowdisplaybreaks
\begin{align}\label{pn2}
\alpha_{n}'(a^2,q^2)&=\alpha_{n}(a,q),\\
\beta_{n}'(a^2,q^2)&=\frac{1}{(-aq;q)_{2n}}\sum_{j=0}^{n}
\frac{(-1)^{n-j}q^{(n-j)^2}}{(q^2;q^2)_{n-j}}
 \beta_{j}(a,q).\notag
\end{align}
}
\end{corollary}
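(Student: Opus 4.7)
\textbf{Proof plan for Corollary \ref{cn44}.}  The template given in Corollary \ref{cn4} tells us exactly what to try: identify the WP-Bailey chain whose $k\to 0$ limit yields the stated Bailey chain, then carry out that limit.  Since the statement features the factor $1/(-aq;q)_{2n}$ and the Gaussian $q^{(n-j)^2}$ with no surviving $k$, the natural candidate is Corollary \ref{cn333}: its chain \eqref{wpn6aa} already contains a ratio of the form $(-k/\sqrt{a};q)_{2n}/(-q\sqrt{a};q)_{2n}$ on the $\beta$-side together with the rescaled Bailey argument $\beta_j(\sqrt{a},k/\sqrt{a},q)$.  (Corollary \ref{cn3} would work equally well, since both chains share the same $k\to 0$ skeleton.)

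The first step is to replace $a$ by $a^2$ throughout \eqref{wpn6aa} so that $\sqrt{a}=a$.  The $\alpha$-side then reads $\alpha_n'(a^2,k,q^2)=\alpha_n(a,k/a,q)$, and sending $k\to 0$ collapses this to the first line of \eqref{pn2}.  On the $\beta$-side the routine factors behave as
$(-k/a;q)_{2n}\to 1$, $(1-k^2q^{4j}/a^2)/(1-k^2/a^2)\to 1$, $(k;q^2)_{n+j}/(k^2q^2/a^2;q^2)_{n+j}\to 1$, and $\beta_j(a,k/a,q)\to\beta_j(a,q)$, while the constant prefactor $(-k/a;q)_{2n}/(-qa;q)_{2n}$ contributes the overall $1/(-aq;q)_{2n}$.

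The only genuine calculation is pairing the divergent factor $(a^2/k;q^2)_{n-j}$ against the vanishing factor $(kq/a^2)^{n-j}$.  As $k\to 0$ one has
\[
(a^2/k;q^2)_{n-j}\sim(-a^2/k)^{n-j}\,q^{(n-j)(n-j-1)},
\]
so that the product telescopes to $(-1)^{n-j}q^{(n-j)(n-j-1)+(n-j)}=(-1)^{n-j}q^{(n-j)^2}$, which is precisely the summand in \eqref{pn2}.  The identity $(n-j)(n-j-1)+(n-j)=(n-j)^2$ is the one bit of arithmetic that drives the whole argument.

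The main obstacle is simply the bookkeeping: one must verify that every stray power of $k$ cancels, in particular that the $k^{-(n-j)}$ produced by $(a^2/k;q^2)_{n-j}$ is killed exactly by the $k^{n-j}$ in $(kq/a^2)^{n-j}$.  Once the asymptotic expansions are lined up and Corollary \ref{cn333} is invoked on the limiting pair, the formula claimed in \eqref{pn2} emerges immediately.
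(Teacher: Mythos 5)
Your proposal is correct and follows exactly the paper's route: the paper's entire proof of Corollary \ref{cn44} is ``let $k\to 0$ in Corollary \ref{cn333} and rearrange,'' and your computation (the asymptotic $(a^2/k;q^2)_{n-j}\sim(-a^2/k)^{n-j}q^{(n-j)(n-j-1)}$ pairing with $(kq/a^2)^{n-j}$ to produce $(-1)^{n-j}q^{(n-j)^2}$) is precisely the ``rearrange'' step made explicit. Your parenthetical observation that Corollary \ref{cn3} has the same $k\to 0$ skeleton is also accurate.
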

\begin{proof}
let $k \to 0$ in Corollary \ref{cn333} and rearrange.
\end{proof}
We  note that is chain is also not new, being  essentially the
Bailey chain (D1) in \cite{BIS00}.

The corollaries above and other results in the literature (for
example the first WP-Bailey chain of Andrews in \cite{A01}) show
that some Bailey chains may be ``lifted" to  WP-Bailey chains.

Is it possible to lift \emph{all} Bailey chains to WP-Bailey chains?

 \allowdisplaybreaks{

}
\end{document}